\numberwithin{equation}{section} 
\renewcommand{\(}{\left( }
\renewcommand{\)}{\right) }
\renewcommand{\theequation}{\theequation. \arabic{equation}}
\numberwithin{equation}{section}
\newtheorem{thm}{Theorem}[section]
\newtheorem{prop}[thm]{Proposition}
\newtheorem{defn}[thm]{Definition}
\def\squarebox#1{\hbox to #1{\hfill\vbox to #1{\vfill}}}
\begin{document}
\title[On a $q$-beta integral with twelve parameters]
{Askey--Wilson polynomials and a double $q$-series transformation formula with twelve parameters}
\author{Zhi-Guo Liu}
\date{\today}
\address{School of Mathematical Sciences and S Shanghai  Key Laboratory of PMMP, East China Normal University, 500 Dongchuan Road,
Shanghai 200241, P. R. China} \email{zgliu@math.ecnu.edu.cn;
liuzg@hotmail.com}
\thanks{The  author was supported in part by
 the National Natural Science Foundation of China and Science and Technology Commission of Shanghai Municipality (Grant No. 13dz2260400)}
\thanks{ 2010 Mathematics Subject Classifications :  05A30,
33D15, 33D45, 11E25.}
\thanks{ Keywords:  Askey--Wilson integral, Askey--Wilson polynomials, Nassrallah--Rahman integral, $q$-beta integral.}
\begin{abstract}
The Askey--Wilson polynomials are the most general classical orthogonal
polynomials that are known and the Nassrallah--Rahman integral is a very
general extension of Euler's integral representation of the classical $_2F_1$ function.
Based on a $q$-series transformation formula and the Nassrallah--Rahman integral
we prove a $q$--beta integral which has twelve parameters, with several other results,
both classical and new, included as special cases. This $q$-beta integral also allows
us to  derive  a curious double $q$--series transformation formula, which includes one
formula of Al--Salam and Ismail as a special case.
\end{abstract}
\maketitle

\section{Introduction and preliminaries}
\noindent
Askey and Wilson \cite{AskWil1985} use the Askey--Wilson integral to find a set of orthogonal polynomials,
which are now known as the Askey--Wilson polynomials. These polynomials
are the most general classical orthogonal polynomials that are known. It is well-known
that \cite{Ismail2009} the Askey--Wilson polynomials have five parameters, one of which is special, usually denoted by $q$, called a base. Ismail and Wilson \cite{IsmailWilson1982}  found  a generating function for the Askey--Wilson polynomials. Szablowski \cite{Szablowski2011}  discussed some equivalent forms  for the Askey--Wilson polynomials. Liu \cite{Liu2013RamJ}  found  a new generating function of the Askey--Wilson polynomials, which  was used to give a simple proof of the orthogonality for the Askey--Wilson polynomials.

It is generally known that it is a diffcult task to obtain double $q$-series
	transformation formulae. Al--Salem and Ismail \cite{Al--SalamIsmail1988} use $q$-Hermite polynomials to give an alternative
evaluation of the Nassrallah--Rahman integral, and as a  byproduct, they derive
a double $q$-series transformation formula with six parameters. In this paper we will use
the orthogonality relation for the Askey--Wilson polynomials to give a completely
new proof of the Nassrallah--Rahman integral and then use this integral to prove a $q$-beta integral formula with $12$ parameters which  includes the
Askey--Wilson integral as a special case.

This $q$-beta integral formula allows us to
prove a double $q$-series transformation formula which contains the Al--Salem and Ismail
double $q$-formula and some other double $q$-transformation formulas as
special cases. We believe that our method is a new way of deriving $q$-formulas.

We now introduce some notations. For $0<q<1$,  the $q$-shifted factorials  are defined as
\begin{equation*}
(a; q)_0=1,\quad (a; q)_n=\prod_{k=0}^{n-1}(1-aq^k), \quad (a;
q)_\infty=\prod_{k=0}^\infty (1-aq^k).
\end{equation*}
If $n$ is an integer or $\infty$ and $m$ is a positive integer, the multiple
$q$-shifted factorial are defined as 
\begin{equation*}
(a_1, a_2,...,a_m;q)_n=(a_1;q)_n(a_2;q)_n ... (a_m;q)_n.
\end{equation*}
As usual, the basic hypergeometric series or the $q$-hypergeometric series $_{r+1}\phi_r$ is
defined by
\begin{equation*}
_{r+1}\phi_r \left({{a_1, a_2, ..., a_{r+1}} \atop {b_1, b_2, ...,
		b_r}} ;  q, z  \right) =\sum_{n=0}^\infty \frac{(a_1, a_2, ...,
	a_{r+1};q)_n z^n} {(q,  b_1, b_2, ..., b_r ;q)_n}.
\end{equation*}
The above $q$-hypergeometric series 
is called  well-poised if the parameters satisfy the relations $qa_1=a_2b_1=a_3b_2=\ldots=a_{r+1}b_r;$
very-well-poised if, in addition, 
$a_2=q\sqrt{a_1}$ and $a_3=-q\sqrt{a_1}.$ 

For notational simplicity, we sometimes use the more compact notation
\[
 _{r+1}W_r(a_1; a_4, a_5, \ldots, a_{r+1}; q, z)
\]
 to denote the very-well-poised series
\begin{equation*}
_{r+1}\phi_r \left({{a_1, q\sqrt{a_1}, -q\sqrt{a_1},..., a_{r+1}} \atop {\sqrt{a_1}, -\sqrt{a_1}, qa_1/a_4...,
		qa_1/a_{r+1}}} ;  q, z  \right) .
\end{equation*}

For $x=\cos \theta$, the Askey--Wilson polynomials $p_n(\cos \theta; a, b,c, d |q)$ are defined as \cite{AskWil1985}, 
\cite[p. 188]{GasperRahman2004}

\begin{equation*}
(ab, ac, ad; q)_n a^{-n} {_4\phi_3}\left({{q^{-n}, abcdq^{n-1}, ae^{-i\theta}, ae^{i\theta}}\atop{ab, ac, ad}}; q, q\right).
\end{equation*}
\begin{defn}\label{quadraticq-shift:defn}
For $x=\cos \theta$, we define $h(x; a|q)$ and 
$h(x; a_1, \ldots, a_m|q)$ as 	
\begin{align*}h(x; a|q)=(ae^{i\theta}, ae^{-i\theta}; q)_\infty
=\prod_{k=0}^\infty (1-2q^k ax+q^{2k} a^2),\\
h(x; a_1, \ldots, a_m|q)
=h(x; a_1|q)h(x; a_2|q)\cdots h(x; a_m|q).
\end{align*}
\end{defn}
For notational simplicity, we will use $W_{(a, b,c, d)}(\cos \theta|q)$ to denote 
\[
\frac{h(\cos 2\theta; 1|q)}{h(\cos \theta; a, b, c, d|q)}
\]
or
\[
\frac{(e^{2i\theta}, e^{-2i\theta}; q)_\infty}
{(ae^{i\theta}, ae^{-i\theta}, be^{i\theta}, be^{-i\theta}, ce^{i\theta}, ce^{-i\theta}, de^{i\theta}, de^{-i\theta}; q)_\infty}.
\]

\begin{prop}\label{AskWilint} For $\max\{|a|, |b|, |c|,|d|\}<1$, the Askey--Wilson  $q$-beta integral formula  states that
\[
\int_{0}^{\pi} W_{(a, b, c, d)}(\cos\theta|q) d\theta
=K(a, b, c, d|q),
\]
 where here, and throughout the paper, 
$K(a, b, c, d|q)$ is given by
\[
K(a, b, c, d|q)=\frac{2\pi (abcd; q)_\infty}{(q, ab, ac, ad, bc, bd, cd; q)_\infty}.
\]
\end{prop}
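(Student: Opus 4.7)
The plan is to transform the trigonometric integral into a contour integral over the unit circle via $z=e^{i\theta}$, and then to evaluate this contour integral by residue calculus, identifying the resulting sum with a very-well-poised bilateral series summable by Bailey's ${}_6\psi_6$ summation theorem.

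First, using the evenness of the integrand in $\theta$, together with the substitution $z=e^{i\theta}$, $2\cos\theta = z+z^{-1}$, $d\theta = dz/(iz)$, I would rewrite
\[
\int_0^\pi W_{(a,b,c,d)}(\cos\theta|q)\, d\theta = \frac{1}{2i}\oint_{|z|=1} \frac{(z^2, z^{-2}; q)_\infty}{(az, a/z, bz, b/z, cz, c/z, dz, d/z; q)_\infty}\, \frac{dz}{z}.
\]
Under the hypothesis $\max\{|a|,|b|,|c|,|d|\}<1$, the integrand is holomorphic on $|z|=1$, and its only singularities inside the unit disk are simple poles at $z = aq^n, bq^n, cq^n, dq^n$ for $n\ge 0$, produced by the four factors $(a/z, b/z, c/z, d/z;q)_\infty$ in the denominator. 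Applying the residue theorem, the contour integral equals $2\pi i$ times the sum of these residues.

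Second, I would explicitly compute the residue at $z = aq^n$ in terms of $q$-shifted factorials; summing over $n$ then produces a very-well-poised ${}_8W_7$ series in the parameter $a$, and the contributions coming from $b,c,d$ are its symmetric counterparts. Combined, these four residue sums assemble into a very-well-poised bilateral ${}_6\psi_6$ series, to which Bailey's summation formula applies directly. After the closed-form evaluation and simplification of common factors, the right-hand side reduces to $2\pi(abcd;q)_\infty / (q,ab,ac,ad,bc,bd,cd;q)_\infty = K(a,b,c,d|q)$, as required.

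The main obstacle I anticipate is the bookkeeping needed to repackage the four residue contributions into the specific very-well-poised bilateral ${}_6\psi_6$ covered by Bailey's theorem, and to verify that all auxiliary factors cancel to produce the strikingly symmetric product on the right-hand side. The complex-analytic part of the argument is routine; the combinatorial simplification is the classical ``miracle'' of the Askey--Wilson integral and is where the real work lies.
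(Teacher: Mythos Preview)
The paper does not prove this proposition: it records the Askey--Wilson integral as a known result, cites several references for proofs, and notes afterward that it is the $m=n=0$ case of the (also quoted without proof) orthogonality relation in Theorem~\ref{AskWilOrth}. So there is no in-paper argument to compare against; I can only assess your proposal on its own terms.

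Your contour-integral reformulation is correct, but the residue step has a genuine gap. The poles at $aq^n, bq^n, cq^n, dq^n$ (for $n\ge 0$) are infinitely many and accumulate at $z=0$; consequently $z=0$ is a non-isolated singularity of the integrand, and Cauchy's residue theorem does not apply to the disk $|z|<1$ as you invoke it. Any rigorous version of this argument must work on an annulus and control the integral over the inner boundary circle as its radius shrinks to zero (equivalently, via the $z\mapsto 1/z$ symmetry of the integrand, over a large circle), and must also show that the resulting infinite residue sum converges --- neither step is automatic, and your outline does not address them. The packaging of the residue contributions into a summable ${}_6\psi_6$ is also less direct than you suggest: four separate unilateral sums (one for each of $a,b,c,d$) do not naturally fuse into a single bilateral series, and in the classical treatments that do connect the Askey--Wilson integral to Bailey's ${}_6\psi_6$, the bilateral series enters through analytic continuation in one parameter (tracking poles that cross $|z|=1$) rather than through a one-shot residue computation inside the unit disk. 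So the part you label ``bookkeeping'' is precisely where the substantive argument is missing.
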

There are several different ways of proving the Askey--Wilson $q$-beta integral formula,
see, for example \cite{Askey1983, AskWil1985, IsmailStanton1987, KalninsMiller1989, Liu1997,  LiuRamJ2009, LiuZeng2015, Liu2016Symmetry,  Rahman1984}.

The orthogonality relation for the Askey--Wilson polynomials is stated in the following
theorem (see, for example \cite[p.384]{Ismail2009} and \cite[p.416]{KoekoekLesky2010}).
\begin{thm}\label{AskWilOrth}
	Let $p_n (\cos \theta; a, b, c, d|q)$ be the Askey--Wilson polynomials and $\delta_{m, n}$ the Kronecker delta. Then, for $\max\{|a|, |b|, |c|, |d|\}<1$,  we have 
\begin{align*}
&\int_{0}^{\pi} W_{(a, b, c, d)}(\cos \theta|q)p_n(\cos \theta; a, b, c,d |q) p_m(\cos\theta; a, b, c, d|q) d\theta \\
&=K(a, b,c, d|q)\frac{(1-abcd/q) (q, ab, ac, ad, bc, bd, cd; q)_n}{
	(1-abcdq^{2n-1}) (abcd/q; q)_n} \delta_{m,n}.
\end{align*}
\end{thm}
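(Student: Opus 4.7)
My plan is to expand both Askey--Wilson polynomials in their ${}_4\phi_3$ forms and reduce the resulting double sum to applications of the Askey--Wilson integral (Proposition~\ref{AskWilint}), using the basic absorption identity
\[
W_{(a,b,c,d)}(\cos\theta|q)\,(ae^{i\theta}, ae^{-i\theta}; q)_k = W_{(aq^k, b, c, d)}(\cos\theta|q),
\]
which is immediate from $(xe^{\pm i\theta}; q)_k = (xe^{\pm i\theta}; q)_\infty/(xq^k e^{\pm i\theta}; q)_\infty$, together with the analogous identities in the other three parameters.

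Next, I would expand $p_n(\cos\theta; a,b,c,d|q)$ using the defining ${}_4\phi_3$ with $a$ singled out, and expand $p_m(\cos\theta; a,b,c,d|q)$ using the corresponding representation with $b$ singled out; the required $(a\leftrightarrow b)$-symmetry of $p_n$ follows from Sears' transformation applied to the balanced terminating ${}_4\phi_3$. After integrating term-by-term and absorbing the $q$-Pochhammer factors into the weight, each surviving single integral equals $K(aq^j, bq^i, c, d|q)$ by Proposition~\ref{AskWilint}. Using
\[
\frac{K(aq^j, bq^i, c, d|q)}{K(a, b, c, d|q)}=\frac{(ab; q)_{i+j}\,(ac, ad; q)_j\,(bc, bd; q)_i}{(abcd; q)_{i+j}}
\]
together with $(ab; q)_{i+j}=(ab; q)_i(abq^i; q)_j$ and $(abcd; q)_{i+j}=(abcd; q)_i(abcdq^i; q)_j$, the $j$-sum collapses to
\[
{}_3\phi_2\left({{q^{-n}, abcdq^{n-1}, abq^i}\atop{ab, abcdq^i}}; q, q\right),
\]
which is Saalschütz--balanced (since $ab\cdot abcdq^i = abcdq^{n-1}\cdot abq^i\cdot q^{1-n}$); by $q$-Pfaff--Saalschütz this equals $(q^{1-n}/cd, q^{-i}; q)_n/[(ab, q^{1-n-i}/abcd; q)_n]$.

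The factor $(q^{-i}; q)_n$ kills every term with $0\le i<n$, so if $m<n$ the outer sum (indexed by $i\in\{0,\ldots,m\}$) vanishes identically and $\int_0^\pi W_{(a,b,c,d)}(\cos\theta|q)\,p_m p_n\, d\theta=0$; the case $m>n$ is handled by the symmetry of the integrand. For $m=n$ only the $i=n$ term survives, and the norm is extracted by applying the reflection formulas
\[
(q^{-n}; q)_n = (-1)^n q^{-n(n+1)/2}(q; q)_n,\quad (q^{1-n}/cd; q)_n = (-1)^n (cd)^{-n} q^{-\binom{n}{2}}(cd; q)_n,
\]
and $(q^{1-2n}/abcd; q)_n = (-1)^n (abcd)^{-n} q^{-n^2-\binom{n}{2}}(abcdq^n; q)_n$, followed by the rearrangement
\[
\frac{(abcdq^{n-1}; q)_n}{(abcd; q)_n\,(abcdq^n; q)_n}=\frac{1-abcd/q}{(1-abcdq^{2n-1})\,(abcd/q; q)_n},
\]
which yields exactly the claimed constant. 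The main obstacle is precisely this final bookkeeping: the three reflections release signs and $q^{\binom{n}{2}}$-type powers that must all cancel cleanly, so a single miscounted exponent would spoil the answer; a secondary technical point is verifying the $(a\leftrightarrow b)$-symmetry of $p_n$ carefully via Sears' transformation before invoking it.
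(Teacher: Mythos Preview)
Your argument is correct and is essentially the classical Askey--Wilson proof: expand one polynomial about $a$ and the other about $b$, integrate term by term against the weight via Proposition~\ref{AskWilint}, sum the inner index by $q$-Pfaff--Saalsch\"utz, and read off orthogonality from the factor $(q^{-i};q)_n$. The reflection bookkeeping you flag as the danger point does indeed cancel cleanly, and your rearrangement
\[
\frac{(abcdq^{n-1};q)_n}{(abcd;q)_n\,(abcdq^n;q)_n}=\frac{1-abcd/q}{(1-abcdq^{2n-1})(abcd/q;q)_n}
\]
is easily checked. One small remark: you do not really need Sears' transformation to justify the $(a\leftrightarrow b)$ form of $p_m$, since the paper already records the full symmetry of $p_n(\cos\theta;a,b,c,d|q)$ in $a,b,c,d$ (Proposition~\ref{AskWilSymmetry}).

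Note, however, that the paper does \emph{not} supply its own proof of Theorem~\ref{AskWilOrth}; it quotes the result from the literature (Ismail \cite{Ismail2009}, Koekoek--Lesky--Swarttouw \cite{KoekoekLesky2010}). The only proof the author alludes to is a different one, referenced after Proposition~\ref{LiuAWGNew}: in \cite{Liu2013RamJ} the orthogonality is derived from the generating function of Proposition~\ref{LiuAWGNew} rather than by the direct double-sum computation you carry out. So your approach is valid and classical, but it is not the route the present paper (or the author's cited alternative) takes.
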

When $m=n=0$ the orthogonality relation for the Askey--Wilson polynomials
reduces to the Askey--Wilson integral.

The Rogers $_6\phi_5$ summation formula (see, for example \cite[p.36]{GasperRahman2004})  is stated in the following proposition.
\begin{prop}\label{RogersSum} For $|{q\alpha}/{bcd}|<1, $ we have the $q$-summation formula
\begin{equation*}
_6W_5 \left(\alpha; b, c, d; q, \frac{q\alpha} {bcd}\right)
=\frac{(q\alpha, \alpha q/bc, \alpha q/bd, \alpha q/cd; q)_\infty}{(\alpha q/b, \alpha q/c, \alpha q/d, \alpha q/bcd; q)_\infty}.
\end{equation*}
\end{prop}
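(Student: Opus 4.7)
The plan is to prove the Rogers $_6\phi_5$ summation by creative telescoping, which gives a self-contained derivation using only manipulations of $q$-Pochhammer symbols. Denote the summand on the left by
\begin{equation*}
t_n=\frac{(1-\alpha q^{2n})(\alpha,b,c,d;q)_n}{(1-\alpha)(q,\alpha q/b,\alpha q/c,\alpha q/d;q)_n}\left(\frac{q\alpha}{bcd}\right)^n,
\end{equation*}
and let $R$ stand for the product on the right-hand side. The aim is to produce a sequence $g_n$ with $g_0=R$, $g_n\to 0$ as $n\to\infty$, and $t_n=g_n-g_{n+1}$ for all $n\geq 0$, so that the sum telescopes to $R$.

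Writing $g_n=R\,h_n$ with $h_0=1$ reduces the task to finding $h_n$ such that $t_n=R(h_n-h_{n+1})$. The $q$-Pochhammer shape of $t_n$ forces a unique candidate for $h_n$ as a ratio of finite $q$-shifted factorials multiplied by $(q\alpha/bcd)^n$: each parameter among $b,c,d,\alpha q/b,\alpha q/c,\alpha q/d$ appearing in $t_n$ must appear in $h_n$ with exactly the shift needed for $h_n-h_{n+1}$ to reproduce the very-well-poised factor $1-\alpha q^{2n}$. With $h_n$ pinned down, the identity $t_n=R(h_n-h_{n+1})$ reduces, after cancelling the common factor $Rh_n$, to a single rational identity in the variable $u:=q^n$; clearing denominators converts it into a polynomial identity of moderate degree, verifiable by direct expansion of both sides.

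Once telescoping is confirmed, the damping factor $(q\alpha/bcd)^n$ built into $h_n$, together with the hypothesis $|q\alpha/bcd|<1$, gives $g_n\to 0$, and one concludes
\begin{equation*}
\sum_{n=0}^\infty t_n=g_0-\lim_{n\to\infty}g_n=R,
\end{equation*}
which is the asserted identity.

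The main obstacle is pinning down the correct $h_n$ and executing the polynomial verification without bookkeeping errors; the essential algebraic input is the factorization encoded in $1-\alpha q^{2n}$, which is what distinguishes this very-well-poised sum from a mere $_5\phi_4$. An alternative route I would keep in reserve is to prove first the terminating specialization $d=q^{-N}$ (where both sides reduce to finite products and the identity is a Saalsch\"utz-type evaluation accessible by induction on $N$ or by reversal of summation), and then extend to general $d$ by analytic continuation, invoking Carlson's theorem to rule out extraneous periodic factors. Both routes ultimately exploit the same very-well-poised structure.
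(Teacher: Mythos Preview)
The paper does not supply a proof of this proposition at all; it simply records the Rogers $_6\phi_5$ summation with a reference to \cite[p.~36]{GasperRahman2004} and then uses it. So there is no ``paper's proof'' to compare against---only the question of whether your argument stands on its own.

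Your telescoping strategy is a legitimate and well-known route to this identity, and the overall architecture (write $t_n=g_n-g_{n+1}$, check $g_0=R$, show $g_n\to 0$) is sound. But as written the proposal is a plan rather than a proof: you never actually exhibit the certificate $h_n$, and the phrase ``the $q$-Pochhammer shape of $t_n$ forces a unique candidate'' is not an argument---it is a promise that an argument exists. The entire content of a creative-telescoping proof lies in writing down the explicit $h_n$ and then verifying the resulting rational identity; until that is done, nothing has been proved. If you intend to submit this as a proof you must supply the explicit formula for $h_n$ (a ratio of four $q$-shifted factorials times the damping power $(q\alpha/bcd)^n$ will work, but you need to state it) and then carry out the polynomial check, or at least reduce it to a transparent factorization.

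Your ``reserve'' route---prove the terminating case $d=q^{-N}$ and extend by analytic continuation---is in fact closer to the standard textbook derivation (e.g.\ the one in Gasper--Rahman, which goes through the terminating balanced $_3\phi_2$). Note, however, that Carlson's theorem is overkill here: both sides are rational in $d$ once the infinite products are rewritten using $(x;q)_\infty$, so agreement on the sequence $d=q^{-N}$ already forces equality by the identity theorem for meromorphic functions, without any growth hypothesis.
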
	
We  proved the following extension of the Rogers summation by using
$q$-exponential differential operator in \cite[Theorem~3]{Liu2011}. For an alternative proof, see \cite[Theorem~6.2]{Liu2013RamLegacy}.
\begin{thm}\label{rogersliuthm}
	For $\max\{|\alpha \beta abc/q^2|, |\alpha \gamma abc/q^2|\}<1$,  we have the identity
	\begin{align*}
	&\sum_{n=0}^\infty \frac{(1-\alpha q^{2n})(\alpha, q/a, q/b, q/c; q)_n}{(q, \alpha a, \alpha b, \alpha c; q)_n}
	\(\frac{\alpha abc}{q^2}\)^n \\
	&\qquad \qquad \qquad  \times  {_4 \phi_3} \left({{q^{-n}, \alpha q^n, \beta, \gamma}
		\atop{q/a, q/b,\alpha \beta \gamma ab/q}}; q, q\right)\\
	&=\frac{(\alpha, \alpha ac/q, \alpha bc/q, \alpha \beta ab/q, \alpha \gamma ab/q, \alpha \beta \gamma abc/q^2; q)_\infty}
	{(\alpha a, \alpha b, \alpha c, \alpha \beta abc/q^2, \alpha \gamma abc/q^2, \alpha \beta \gamma ab/q; q )_\infty}.
	\end{align*}
\end{thm}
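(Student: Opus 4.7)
My plan is to prove the theorem first in the terminating case $\gamma = q^{-N}$ ($N = 0, 1, 2, \dots$), where all manipulations are finite and rigorous, and then to extend to general $\gamma$ by analytic continuation.

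\emph{Step 1.} Set $\gamma = q^{-N}$. The inner $_4\phi_3$ is then a finite sum with $k$ running from $0$ to $\min(n, N)$, so substituting it into the left-hand side yields an absolutely convergent double sum in $n$ and $k$ whose order may be freely interchanged. Using $(q^{-n};q)_k = (-1)^k q^{\binom{k}{2}-nk}(q;q)_n/(q;q)_{n-k}$ and $(\alpha;q)_n(\alpha q^n;q)_k = (\alpha;q)_{n+k}$, and making the shift $n = m+k$, the inner sum over $m$ (for each fixed $k \le N$) equals $(1-\alpha q^{2k})$ times
\[
{_6W_5}\!\left(\alpha q^{2k};\, q^{k+1}/a,\, q^{k+1}/b,\, q^{k+1}/c;\, q,\, \alpha abc\, q^{-k-2}\right),
\]
whose argument equals $q(\alpha q^{2k})/[(q^{k+1}/a)(q^{k+1}/b)(q^{k+1}/c)]$, so Proposition \ref{RogersSum} applies.

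\emph{Step 2.} Apply Proposition \ref{RogersSum} and simplify. The three identities
\[
(\alpha;q)_{2k}(1-\alpha q^{2k})(\alpha q^{2k+1};q)_\infty = (\alpha;q)_\infty,\qquad (xq^k;q)_\infty = (x;q)_\infty/(x;q)_k,
\]
together with the negative-shift formula
\[
(\alpha abc\, q^{-k-2};q)_\infty = (\alpha abc/q^2;q)_\infty \cdot (-\alpha abc/q^2)^k q^{-\binom{k+1}{2}}(q^3/(\alpha abc);q)_k,
\]
collapse all the $k$-dependent infinite products, and the signs $(-1)^k$ together with the powers $q^{\binom{k}{2}-k^2+\binom{k+1}{2}} = q^0$ cancel exactly. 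What remains is
\[
\text{LHS} = \frac{(\alpha,\, \alpha ab/q,\, \alpha ac/q,\, \alpha bc/q;q)_\infty}{(\alpha a,\, \alpha b,\, \alpha c,\, \alpha abc/q^2;q)_\infty}\cdot \sum_{k=0}^{N}\frac{(\beta,\, q^{-N},\, q/c;q)_k\, q^k}{(q,\, \alpha\beta ab\, q^{-N-1},\, q^3/(\alpha abc);q)_k},
\]
and the residual $_3\phi_2$ is terminating and Saalsch\"utzian: the identity $q\cdot\beta\cdot q^{-N}\cdot(q/c) = (\alpha\beta ab\, q^{-N-1})(q^3/(\alpha abc))$ is immediate.

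\emph{Step 3.} The $q$-Pfaff--Saalsch\"utz summation evaluates the terminating balanced $_3\phi_2$ as a finite ratio of $q$-Pochhammer symbols. A parallel rewriting of the theorem's right-hand side at $\gamma = q^{-N}$ --- using $(yq^{-N};q)_\infty = (y;q)_\infty(-y)^N q^{-\binom{N+1}{2}}(q/y;q)_N$ to peel off the $\gamma$-dependent infinite products --- produces exactly the same finite ratio, confirming the theorem at every $\gamma = q^{-N}$. Since both sides are analytic in $\gamma$ on the disk $|\alpha\gamma abc/q^2|<1$ and agree on the sequence $\gamma = q^{-N}\to 0$, analytic continuation gives the theorem in full generality.

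\textbf{Main obstacle.} The principal difficulty is the Step 2 bookkeeping: the signs $(-1)^k$, the powers of $q$, and the negative-shift identity must all be tracked precisely so that the $k$-dependent prefactors collapse cleanly to the simple residual $_3\phi_2$. The detour through the terminating case in Step 3 is genuinely necessary: for non-terminating $\gamma$, the naive interchange of sums in Step 1 is not absolutely convergent, and the formal $_3\phi_2$ that would result is not a classically summable series, so analytic continuation from the values $\gamma = q^{-N}$ is essential.
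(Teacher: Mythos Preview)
The paper does not give its own proof of this theorem; it simply quotes the result from \cite{Liu2011} (where it is proved via the $q$-exponential differential operator) and cites \cite{Liu2013RamLegacy} for an alternative. Your approach---reduce to a ${}_6W_5$ via index shift, sum it by Proposition~\ref{RogersSum}, and finish with $q$-Pfaff--Saalsch\"utz---is thus a genuinely different route, and a natural one. Steps~1--3 are correct: the index manipulation $n=m+k$ is clean, the Rogers ${}_6W_5$ applies with the argument you state, the bookkeeping in Step~2 (signs, $q$-powers, negative-shift identity) checks out exactly, the residual ${}_3\phi_2$ is balanced, and its Saalsch\"utz evaluation matches the right-hand side at $\gamma=q^{-N}$.

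The gap is in the final analytic continuation. Under the paper's standing assumption $0<q<1$, the sequence $q^{-N}\to\infty$, not $0$; it has no accumulation point inside the disk $|\alpha\gamma abc/q^2|<1$, so the one-variable identity theorem in $\gamma$ cannot be invoked as you do. (Indeed, $(\gamma;q)_\infty$ is a nonzero entire function vanishing at every $q^{-N}$, so vanishing on that sequence alone cannot force a holomorphic function to be zero.) A correct way to close this: expand both sides as power series in $\alpha$ about $\alpha=0$. The $n$-th summand on the left contributes only from order $\alpha^n$ onward, and each coefficient of $\alpha^m$ on either side is a \emph{polynomial} in $\gamma$ (the only $\gamma$-dependence comes from $(\gamma;q)_k$ and the geometric expansion of $1/(\alpha\beta\gamma ab/q;q)_k$ in powers of $\alpha$, both polynomial in $\gamma$). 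Your Steps~1--3 show these polynomial coefficients agree at the infinitely many points $\gamma=q^{-N}$, hence identically; equality of the power series in $\alpha$ then gives the theorem on a neighbourhood of $\alpha=0$, and ordinary analytic continuation in $\alpha$ (now with $\gamma$ fixed and harmless) extends it to the stated domain.
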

When $\gamma=0$, this theorem reduces to the $q$-summation formula due to Ismail--
Rahman--Suslov \cite{IsmailRahmanSuslov1997}.

Upon replacing $c$ by $qt$ and then $(a, b)$ by $(q/ab, q/ac)$ in the equation above, we find that
	\begin{align*}
&\sum_{n=0}^\infty \frac{(1-\alpha q^{2n})(\alpha, ab, ac, t^{-1}; q)_n}{(1-\alpha)(q, q\alpha/ab, q\alpha/ac, \alpha tq; q)_n}
\(\frac{\alpha tq}{a^2bc}\)^n \\
&\qquad \qquad \qquad  \times  {_4 \phi_3} \left({{q^{-n}, \alpha q^n, \beta, \gamma}
	\atop{ab, ac, q\alpha \beta \gamma/a^2bc}}; q, q\right)\\
&=\frac{(q\alpha, q\alpha t/ab, q\alpha t/ac, q\alpha \beta/a^2bc, q\alpha \gamma /a^2bc, q\alpha \beta \gamma t/a^2bc; q)_\infty}
{(q\alpha/ ab, q\alpha/ac, q\alpha t , q\alpha \beta t/a^2bc, q\alpha \gamma t/a^2bc, q\alpha \beta \gamma /a^2bc; q )_\infty}.
\end{align*}

The following proposition \cite[Theorem~4]{Liu2011} can be derived from the above equation by  replacing $(\alpha, \beta, \gamma)$ with $(abcd/q, ae^{i\theta}, ae^{-i\theta})$.

\begin{prop} \label{LiuAWG} For $|dt|<1,$ the Askey--Wilson polynomials satisfy
\begin{align*}
\sum_{n=0}^\infty \frac{(1-abcdq^{2n-1})(abcd/q, t^{-1})_n (dt)^n}{(1-abcd/q)(q, ad, bd, cd, abcdt; q)_n}
p_n(\cos \theta; a, b, c, d|q)\\
=\frac{(abcd, adt, bdt, cdt,  de^{i\theta}, de^{-i\theta}; q)_\infty}{(abcdt, ad, bd, cd, dte^{i\theta}, dte^{-i\theta}; q)_\infty}.
\end{align*}		
\end{prop}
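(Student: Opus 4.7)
The plan is simply to execute the substitution the author has already flagged in the sentence preceding the proposition: set $(\alpha,\beta,\gamma)=(abcd/q,\,ae^{i\theta},\,ae^{-i\theta})$ in the reformulated identity displayed just above it. No new ingredient is needed; the only conceptual step is recognizing that the inner $_4\phi_3$ becomes an Askey--Wilson polynomial, and the rest is routine simplification.

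First, I would compute the effect on the $_4\phi_3$. Under the substitution one finds $q\alpha\beta\gamma/(a^2bc)=ad$, so the inner basic hypergeometric series becomes
\[
{}_4\phi_3\!\left({{q^{-n},\,abcdq^{n-1},\,ae^{i\theta},\,ae^{-i\theta}}\atop{ab,\,ac,\,ad}};\,q,\,q\right),
\]
which by the definition of $p_n(\cos\theta;a,b,c,d|q)$ recorded earlier in the excerpt equals $a^n(ab,ac,ad;q)_n^{-1}\,p_n(\cos\theta;a,b,c,d|q)$. This is the one nontrivial observation. The outer coefficient then simplifies via $q\alpha/ab=cd$, $q\alpha/ac=bd$, $\alpha tq=abcdt$, and $(\alpha tq/a^2bc)^n=(dt)^n/a^n$; combining these, the $a^{\pm n}$ factors cancel and the $(ab,ac;q)_n$ in the outer numerator cancel those brought down by the polynomial rewrite, leaving precisely the coefficient claimed in the proposition.

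On the right-hand side, the twelve monomials in $\alpha,\beta,\gamma$ appearing in the infinite product reduce mechanically to the factors $abcd,\,adt,\,bdt,\,cdt,\,de^{i\theta},\,de^{-i\theta}$ in the numerator and $abcdt,\,ad,\,bd,\,cd,\,dte^{i\theta},\,dte^{-i\theta}$ in the denominator, matching the stated product exactly. The only point requiring a moment's thought is convergence: after the polynomial rewrite the effective series ratio becomes $dt$ (the $1/a^n$ from $(dt/a)^n$ being absorbed by the $a^n$ from the polynomial), and the hypothesis $|dt|<1$ in the proposition is exactly what the earlier identity requires once the substitution is made. With both sides matched termwise, the result follows.
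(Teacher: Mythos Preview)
Your proposal is correct and follows precisely the route the paper itself indicates: the paper derives Proposition~\ref{LiuAWG} by the very substitution $(\alpha,\beta,\gamma)=(abcd/q,\,ae^{i\theta},\,ae^{-i\theta})$ in the displayed reformulation of Theorem~\ref{rogersliuthm}, and your verification of the $_4\phi_3$, the coefficient simplification, the infinite product, and the convergence condition $|dt|<1$ are all accurate.
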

Professor Ismail kindly point out that this generating  function  follows  from
general  theory  of  connection  relations  and  from  the  connection  relation  in
the Askey--Wilson memoir.
	
This  generating function of the Askey--Wilson polynomials implies the
following proposition \cite[Corollary~4]{Liu2011}.
\begin{prop}\label{AskWilSymmetry}
The Askey--Wilson polynomials $p_n(\cos \theta; a, b, c, d|q)$ are symmetric
in $a, b, c$ and $d$.	
\end{prop}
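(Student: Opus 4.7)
The plan is to combine two separate symmetries that together generate the symmetric group $S_4$ acting on $(a,b,c,d)$. The ${}_4\phi_3$ definition of $p_n$ displays symmetry in $b,c,d$ directly, and the generating function in Proposition~\ref{LiuAWG} will yield symmetry in $a,b,c$ by a specialization-and-induction argument. Since the transpositions $(a\,b)$, $(b\,c)$, $(c\,d)$ generate $S_4$, these two partial symmetries together imply the full result.

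For the first half I would simply observe that the prefactor $(ab,ac,ad;q)_n$, the upper parameter $abcd\,q^{n-1}$, and the lower parameters $ab,ac,ad$ in the definition of $p_n(\cos\theta;a,b,c,d|q)$ are each symmetric in $b,c,d$, giving $p_n$ symmetric in $b,c,d$. For the second half, I write Proposition~\ref{LiuAWG} schematically as $\sum_{n\geq 0} A_n(t)\,p_n(\cos\theta;a,b,c,d|q) = F(\theta,t)$, where $A_n(t)$ is the stated coefficient of $p_n$ and $F(\theta,t)$ is the right-hand side. Both $A_n(t)$ and $F(\theta,t)$ depend on $a,b,c$ only through the symmetric combinations $abcd$, $ad$, $bd$, $cd$ (with $adt$, $bdt$, $cdt$ also appearing in $F$), so the whole identity is invariant under permutations of $\{a,b,c\}$. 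I would then specialize $t=q^k$ for a nonnegative integer $k$: since $(t^{-1};q)_n = (q^{-k};q)_n$ contains the factor $(1-q^0)=0$ whenever $n>k$, the series collapses to the finite identity
\[
\sum_{n=0}^{k} A_n(q^k)\,p_n(\cos\theta;a,b,c,d|q) \;=\; F(\theta,q^k;a,b,c,d).
\]

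An induction on $k$ with base case $p_0=1$ should finish the job. At step $k$, applying a permutation $\sigma$ of $\{a,b,c\}$ to the displayed identity and subtracting it from the original yields $\sum_{n=0}^{k} A_n(q^k)\bigl[\,p_n(\cos\theta;a,b,c,d|q)-p_n(\cos\theta;\sigma(a),\sigma(b),\sigma(c),d|q)\,\bigr]=0$; by the inductive hypothesis the terms with $n<k$ vanish, leaving only $A_k(q^k)$ times the difference at level $n=k$. The main obstacle will be verifying that $A_k(q^k)$ does not vanish as a function of the remaining parameters, but this is visible from the explicit formula since the numerator factors $(q^{-k};q)_k$ and $(abcd/q;q)_k$ are generically nonzero, and the choice $t=q^k$ keeps $|dt|<1$ for $|d|<1$ so raises no convergence issue. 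Dividing by $A_k(q^k)$ then forces $p_k$ to be symmetric in $a,b,c$ and closes the induction.
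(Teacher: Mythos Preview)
Your proposal is correct and follows the same route as the paper, which merely asserts (without a detailed argument) that the symmetry is a consequence of the generating function in Proposition~\ref{LiuAWG} and cites \cite[Corollary~4]{Liu2011}. Your truncation $t=q^k$ together with induction is a clean way to make that implication explicit; the paper itself gives no further detail.
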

Interchanging $a$ and $d$ in Proposition~\ref{LiuAWG} and noting  $p_n(\cos \theta; a, b, c, d|q)$ are symmetric
symmetric in $a$ and $d$, we deduce  the following proposition.

\begin{prop} \label{LiuAWGNew} For $|at|<1,$ the Askey--Wilson polynomials satisfy
	\begin{align*}
	\sum_{n=0}^\infty \frac{(1-abcdq^{2n-1})(abcd/q, t^{-1})_n (at)^n}{(1-abcd/q)(q, ab, ac, ad, abcdt; q)_n}
	p_n(\cos \theta; a, b, c, d|q)\\
	=\frac{(abcd, abt, act, adt,  ae^{i\theta}, ae^{-i\theta}; q)_\infty}{(abcdt, ab, ac, ad, ate^{i\theta}, ate^{-i\theta}; q)_\infty}.
	\end{align*}		
\end{prop}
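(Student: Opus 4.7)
The plan is to apply the full parameter symmetry of the Askey--Wilson polynomials (Proposition~\ref{AskWilSymmetry}) to the identity in Proposition~\ref{LiuAWG}. The left-hand side of Proposition~\ref{LiuAWG} distinguishes the parameter $d$ from $a,b,c$ (it contains factors involving $d$ but not $a$), while the polynomial factor $p_n(\cos\theta;a,b,c,d|q)$ is invariant under any permutation of its four parameters. Swapping $a\leftrightarrow d$ in the identity will therefore yield the desired new formula.

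Concretely, I would track how each factor transforms under $a\leftrightarrow d$. On the left-hand side, the scalars $abcd/q$, $abcdq^{2n-1}$, $abcdt$, and $t^{-1}$ are symmetric in $(a,b,c,d)$ and so are unchanged; the factor $(dt)^n$ becomes $(at)^n$; the product $(ad,bd,cd;q)_n$ becomes $(ad,ab,ac;q)_n$, which is exactly $(ab,ac,ad;q)_n$ up to reordering; and $p_n(\cos\theta;d,b,c,a|q)=p_n(\cos\theta;a,b,c,d|q)$ by Proposition~\ref{AskWilSymmetry}. On the right-hand side, $(abcd;q)_\infty$ and $(abcdt;q)_\infty$ are unchanged, $(adt;q)_\infty$ is fixed, $(bdt,cdt;q)_\infty$ transforms into $(abt,act;q)_\infty$, $(de^{\pm i\theta};q)_\infty$ becomes $(ae^{\pm i\theta};q)_\infty$, and in the denominator $(bd,cd;q)_\infty$ becomes $(ab,ac;q)_\infty$ while $(dte^{\pm i\theta};q)_\infty$ becomes $(ate^{\pm i\theta};q)_\infty$. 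Finally the convergence hypothesis $|dt|<1$ is replaced by $|at|<1$.

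There is no substantial obstacle: the argument is a mechanical relabelling that invokes Proposition~\ref{AskWilSymmetry} exactly once. The only point requiring care is to verify termwise that every $q$-shifted factorial and every power of the summation variable transforms as expected, and that the resulting identity matches the statement of Proposition~\ref{LiuAWGNew} after the cosmetic reordering of parameters inside $(q,ab,ac,ad,abcdt;q)_n$ and similar products.
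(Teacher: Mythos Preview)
Your proposal is correct and is exactly the argument the paper gives: the text immediately preceding Proposition~\ref{LiuAWGNew} states that one interchanges $a$ and $d$ in Proposition~\ref{LiuAWG} and uses the symmetry of $p_n(\cos\theta;a,b,c,d|q)$ in $a$ and $d$ from Proposition~\ref{AskWilSymmetry}. Your termwise check of the transformation is accurate and nothing further is needed.
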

This generating function  was used by us to give a new proof of the orthogonality for the Askey--Wilson polynomials \cite[pp. 199--200]{Liu2013RamJ}.

In Section~2 of this paper we will use Proposition~\ref{LiuAWG} and the orthogonality relation
for the Askey--Wilson polynomials to give a new proof of the Nassrallah--Rahman
integral. Section~3 is devoted to some inequalities for $q$-series. In Section~4, we will
prove the following $q$-beta integral which has twelve parameters with base $q$.

\begin{thm}\label{Liu12parametersint}
For $\alpha=a^2bcdr/q$ and $\max\{|a|, |b|, |c|,|d|, |r|, |s|, |t|, |h|, |z|\}<1,$	we have 
\begin{align*}
&\int_{0}^{\pi} \frac{h(\cos 2\theta; 1|q)}{h(\cos \theta; a, b, c, d, r|q)}
 {_4 \phi_3} \left({{ae^{i\theta}, a e^{-i\theta}, \beta, \delta}
	\atop{s, t, h}}; q, bcdrz\right)d\theta\\
&=\frac{2\pi(abcd, abcr, abdr, acdr; q)_\infty}{(q, ab, ac, ad, ar, bc, bd, br, cd, cr, dr, q\alpha; q)_\infty}\\
&\qquad \times 
\sum_{n=0}^\infty \frac{(1-\alpha q^{2n})(\alpha, ab, ac, ad, ar; q)_n}{(1-\alpha)(q, abcd, abcr, abdr, acdr; q)_n}(-bcdr)^n q^{n(n-1)/2}\\
&\qquad \qquad \qquad \qquad \qquad \times 
{_4 \phi_3} \left({{q^{-n}, \alpha q^{n}, \beta, \delta}
	\atop{s, t, h}}; q, qz\right).
\end{align*}
The theorem also holds when $|z|=1$, provided the series on the right-hand side of
the above equation converges absolutely for 
$|z|=1.$
\end{thm}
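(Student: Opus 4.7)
The plan is to expand the inner ${}_4\phi_3$ of the integral as a power series in its argument $bcdrz$, apply the Nassrallah--Rahman integral (established in Section~2) term by term, and then rearrange the resulting double series to recognize the right-hand side. The key identity is $(ae^{i\theta}, ae^{-i\theta}; q)_m = h(\cos\theta; a|q)/h(\cos\theta; aq^m|q)$: after writing
\[
{}_4\phi_3\!\left({{ae^{i\theta}, ae^{-i\theta}, \beta, \delta}\atop{s, t, h}}; q, bcdrz\right) = \sum_{m=0}^\infty \frac{(\beta, \delta; q)_m (bcdrz)^m}{(q, s, t, h; q)_m}(ae^{i\theta}, ae^{-i\theta}; q)_m,
\]
the $m$-th term of the left-hand integrand becomes exactly the Nassrallah--Rahman integrand with $a$ replaced by $aq^m$. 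Interchanging sum and integral---valid for $|z|<1$ by absolute convergence, and on $|z|=1$ by a dominated convergence argument using the hypothesized absolute convergence of the right-hand side (and, if needed, the $q$-series inequalities of Section~3)---reduces the left-hand side to $\sum_m [(\beta,\delta;q)_m (bcdrz)^m/(q,s,t,h;q)_m]\, N(aq^m,b,c,d,r)$, where $N$ denotes the Nassrallah--Rahman integral.

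Next, I would substitute the Nassrallah--Rahman evaluation from Section~2. Observe that the $z=0$ case of the theorem is precisely this evaluation: it reads $N(a,b,c,d,r) = P(a,b,c,d,r)\sum_n T_n$, where $P$ and $T_n$ denote the prefactor and summand on the right-hand side of the theorem. Applying this with $a\to aq^m$, using $(xq^m;q)_\infty = (x;q)_\infty/(x;q)_m$ to extract the $m$-independent pieces, one factors out the common prefactor $P(a,b,c,d,r)$, picks up the residual ratio $(ab,ac,ad,ar;q)_m(q\alpha;q)_{2m}/(abcd,abcr,abdr,acdr;q)_m$, and rewrites $T_n(aq^m)$ in terms of shifted Pochhammer symbols.

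The final step is to swap the order of summation and reindex $N = n+m$, so that $N$ plays the role of the outer index on the right-hand side of the theorem. Using the standard identity $(q^{-N};q)_m = (-1)^m q^{\binom{m}{2}-Nm}(q;q)_N/(q;q)_{N-m}$ and repeated applications of $(xq^m;q)_n = (x;q)_{m+n}/(x;q)_m$, a direct calculation yields
\[
(bcdr)^m\, T_{N-m}(aq^m)\cdot\frac{(ab, ac, ad, ar; q)_m (q\alpha; q)_{2m}}{(abcd, abcr, abdr, acdr; q)_m} = q^m\, T_N(a)\,(q^{-N}, \alpha q^N; q)_m,
\]
the crucial $q$-power reconciliation being $\binom{N-m}{2} - \binom{N}{2} - \binom{m}{2} + Nm = m$. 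Weighted summation over $m$ with $(\beta,\delta;q)_m z^m/(q,s,t,h;q)_m$ then converts the inner sum into the terminating ${}_4\phi_3(q^{-N}, \alpha q^N, \beta, \delta; s, t, h; q, qz)$ appearing in the theorem, and the outer sum in $N$ reassembles into $P(a,b,c,d,r)\sum_N T_N \cdot {}_4\phi_3$. The main obstacle is this bookkeeping---tracking the shifts of many $q$-Pochhammer symbols and the interplay of the powers $q^{\binom{m}{2}}$, $q^{\binom{n}{2}}$, and $q^{\binom{m+n}{2}}$ simultaneously---but the identity above shows the reconciliation is clean, so no new analytic input beyond Section~2 is required.
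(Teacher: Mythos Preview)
Your opening move---expanding the ${}_4\phi_3$ in powers of $bcdrz$ and integrating term by term against the five-parameter weight with $a\to aq^m$---is correct, and in fact it is exactly what the paper does in Section~5 to pass from Theorem~\ref{Liu12parametersint} to Theorem~\ref{Liu12parametersdoub}. After this step you have the left-hand side of Theorem~\ref{Liu12parametersdoub}, and what remains is to show it equals the very-well-poised right-hand side.

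The gap is in the next step. You write ``substitute the Nassrallah--Rahman evaluation from Section~2'' and then identify that evaluation with the $z=0$ case of the theorem, namely $N(a,b,c,d,r)=P(a,b,c,d,r)\sum_n T_n$. But Section~2 does \emph{not} give $N$ in this form. Theorem~\ref{IsmailStanton} evaluates $N(a,b,c,d,r)$ as a product times a ${}_3\phi_2$, and Theorem~\ref{NassRahInt} evaluates a \emph{different} integral (one with a numerator factor $h(\cos\theta;d|q)$) as a product times an ${}_8W_7$; neither yields the particular very-well-poised sum $\sum_n T_n$ with its characteristic $(-bcdr)^n q^{\binom{n}{2}}$ factor. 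The statement $N=P\sum_n T_n$ \emph{is} the $z=0$ specialization of Theorem~\ref{Liu12parametersint}, so invoking it makes the argument circular. Your subsequent reindexing with $N=n+m$ is a correct formal manipulation, but it only shows that the general-$z$ identity is equivalent to its $z=0$ case; it does not prove either one.

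The paper avoids this circularity by bringing in an \emph{external} ingredient, Proposition~\ref{Liu2013IJNT}, which already transforms the ${}_4\phi_3(ae^{i\theta},ae^{-i\theta},\beta,\delta;\ldots)$ into a $\theta$-dependent sum whose $n$-th term carries the full very-well-poised structure $B_n$. The resulting integrand then has a factor $h(\cos\theta;abcdrq^n|q)$ in the numerator, so the termwise integral is evaluated in closed form by Theorem~\ref{RahInt} rather than by Theorem~\ref{IsmailStanton}. In effect, Proposition~\ref{Liu2013IJNT} is precisely the missing ${}_3\phi_2\to\text{VWP}$ transformation that your argument needs at $z=0$; without it (or an equivalent identity proved independently of Theorem~\ref{Liu12parametersint}), the proposal does not close.
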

 Note that when $r=0$, the above integral immediately reduces to the
Askey--Wilson integral in Proposition~\ref{AskWilint}. In Section~5, we will show that Theorem~\ref{Liu12parametersint} is equivalent to
the following double $q$-series transformation formula.
\begin{thm}\label{Liu12parametersdoub}
	For $\alpha=a^2bcdr/q$ and $\max\{|a|, |b|, |c|,|d|, |r|, |s|, |t|, |h|, |z|\}<1,$	we have
\begin{align*}
&\sum_{n=0}^\infty \frac{(\beta, \delta, ab, ac, ad, ar; q)_n (bcdrz)^n}{(q, s, t, h, abcd, abcr; q)_n}
{_3 \phi_2} \left({{abq^{n}, acq^{n}, bc}
	\atop{abcdq^n, abcrq^n}}; q, dr\right)\\
&=\frac{(abdr, acr; q)_\infty}{(dr, q\alpha; q)_\infty}
\sum_{n=0}^\infty \frac{(1-\alpha q^{2n})(\alpha, ab, ac, ad, ar; q)_n}{(1-\alpha)(q, abcd, abcr, abdr, acdr; q)_n}(-bcdr)^n q^{n(n-1)/2}\\
&\qquad \qquad \qquad \qquad \qquad \times 
{_4 \phi_3} \left({{q^{-n}, \alpha q^{n}, \beta, \delta}
	\atop{s, t, h}}; q, qz\right).
\end{align*}	
The theorem also holds when $|z|=1$, provided the series on the right-hand side of
the above equation converges absolutely for 
$|z|=1.$
\end{thm}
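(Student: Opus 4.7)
The plan is to derive Theorem~\ref{Liu12parametersdoub} from Theorem~\ref{Liu12parametersint} by computing the integral on the LHS of Theorem~\ref{Liu12parametersint} a second time. The first computation, carried out in Section~4, gives its RHS; a second computation will produce the LHS of Theorem~\ref{Liu12parametersdoub} up to an explicit prefactor, and equating the two values will establish the identity.

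First I expand the $_4\phi_3$ in the integrand of Theorem~\ref{Liu12parametersint} as a power series and interchange summation and integration, which is justified by absolute convergence for $|z|<1$. Using
$(ae^{i\theta},ae^{-i\theta};q)_n=h(\cos\theta;a|q)/h(\cos\theta;aq^n|q)$,
each $n$th term reduces to
\[
J_n=\int_0^\pi \frac{h(\cos 2\theta;1|q)}{h(\cos\theta;aq^n,b,c,d,r|q)}\,d\theta,
\]
the Nassrallah--Rahman integral with $a$ replaced by $aq^n$. Hence the LHS of Theorem~\ref{Liu12parametersint} equals
\[
\sum_{n\ge 0}\frac{(\beta,\delta;q)_n\,(bcdrz)^n}{(q,s,t,h;q)_n}\,J_n.
\]

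Next I evaluate $J_n$ in a different way than the ${_8W_7}$ representation used in Section~4. I apply Proposition~\ref{LiuAWG} with parameters $(aq^n,b,c,d)$ and $t=r/d$ to expand $h(\cos\theta;d|q)/h(\cos\theta;r|q)$ as a series in the Askey--Wilson polynomials $p_m(\cos\theta;aq^n,b,c,d|q)$, substitute the expansion into $J_n$, and integrate term-by-term against the Askey--Wilson weight using the orthogonality of Theorem~\ref{AskWilOrth} together with the Askey--Wilson integral formula of Proposition~\ref{AskWilint}. Using the identity $(xq^n;q)_m(x;q)_n=(x;q)_{n+m}$ for $x\in\{ab,ac,abcd,abcr\}$ to repackage the resulting coefficients, the series collapses to
\[
J_n=\Lambda\cdot\frac{(ab,ac,ad,ar;q)_n}{(abcd,abcr;q)_n}\,{_3\phi_2}\left({abq^n,acq^n,bc\atop abcdq^n,abcrq^n};q,dr\right),
\]
with $\Lambda$ an explicit $n$-independent constant.

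Substituting this evaluation back into the sum of Step~1 exhibits the LHS of Theorem~\ref{Liu12parametersint} as $\Lambda$ times the LHS of Theorem~\ref{Liu12parametersdoub}; dividing by $\Lambda$ and simplifying the ratio of prefactors down to $(abdr,acr;q)_\infty/(dr,q\alpha;q)_\infty$ gives the identity of Theorem~\ref{Liu12parametersdoub}. The main obstacle is Step~2: the expansion supplied by Proposition~\ref{LiuAWG} leaves a residual factor of $1/h(\cos\theta;d|q)$ that obstructs direct application of Askey--Wilson orthogonality, so surmounting it requires a second application of Proposition~\ref{LiuAWG} (or of Proposition~\ref{LiuAWGNew}) with a compatible parameter choice so that the spurious $h$-factor cancels and the remaining integrand is a clean multiple of the Askey--Wilson weight paired with $p_m(\cos\theta;aq^n,b,c,d|q)$, after which orthogonality produces the claimed $_3\phi_2$ with base $dr$.
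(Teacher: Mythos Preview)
Your overall plan is correct and matches the paper: expand the ${_4\phi_3}$ term by term, interchange summation and integration, and reduce the left side of Theorem~\ref{Liu12parametersint} to a sum involving
\[
J_n=\int_0^\pi \frac{h(\cos 2\theta;1|q)}{h(\cos\theta;aq^n,b,c,d,r|q)}\,d\theta,
\]
after which equating with the right side of Theorem~\ref{Liu12parametersint} yields Theorem~\ref{Liu12parametersdoub}. Steps~1 and~3 are exactly what the paper does in Section~5.

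The difference---and the gap---is your Step~2. The paper does not attempt to re-derive $J_n$ from Askey--Wilson orthogonality; it simply observes that $J_n$ is precisely the Ismail--Stanton--Viennot integral (Theorem~\ref{IsmailStanton}) with parameters $(aq^n,b,c,d,r)$, which immediately gives
\[
J_n=\frac{2\pi(abcdq^n,abcrq^n;q)_\infty}{(q,abq^n,acq^n,adq^n,arq^n,bc,bd,br,cd,cr;q)_\infty}\,
{_3\phi_2}\!\left({abq^n,acq^n,bc\atop abcdq^n,abcrq^n};q,dr\right).
\]
This is already available as the $d=0$ specialization of Theorem~\ref{NassRahInt}, so no further work is needed.

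Your proposed route through Proposition~\ref{LiuAWG} and orthogonality has a genuine obstruction that your final paragraph does not resolve. Propositions~\ref{LiuAWG} and~\ref{LiuAWGNew} always produce a ratio $h(\cos\theta;x|q)/h(\cos\theta;xt|q)$ on the generating-function side, never a bare $1/h(\cos\theta;xt|q)$. Consequently, any single application leaves a stray numerator $h$-factor, and a ``second application'' produces yet another such ratio; there is no compatible choice of parameters that makes the two spurious numerators cancel against one another while leaving exactly $W_{(aq^n,b,c,d)}(\cos\theta|q)\,p_m(\cos\theta;aq^n,b,c,d|q)$ under the integral. In effect, you are trying to reprove Theorem~\ref{IsmailStanton} by a mechanism that does not fit, when the theorem itself is already at hand. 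Replace Step~2 by a direct citation of Theorem~\ref{IsmailStanton} and the proof goes through cleanly.
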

Some interesting special cases of Theorem~\ref{Liu12parametersint} will be discussed in Section~6.

\section{A new proof of the Nassrallah--Rahman integral}
Nassrallah and Rahman \cite[pp.192--194]{NassrallahRahman1985} used the integral representation of the
sum of two non-terminating $_3\phi_2$ series and the Askey--Wilson integral to find the
following $q$-beta integral formula, which is a very general
extension of Euler's integral representation of the classical $_2F_1$ function. Ismail and
Zhang \cite{IsmailZhang2005} evaluate the Nassrallah--Rahman integral in a clever way by using the
method of analytic continuation.
In this section we will use Proposition~\ref{LiuAWG} and the orthogonality relation for the
Askey--Wilson polynomials to give a new proof of the Nassrallah--Rahman integral.
Our proof is somewhat simpler than that of Nassrallah and Rahman.
\begin{thm}\label{NassRahInt} For $\{|a|, |b|, |c|,|u|, |v|\}<1,$ we have the integral formula	
\begin{align*}
&\int_{0}^{\pi} \frac{h(\cos 2\theta; 1|q)h(\cos \theta; d|q)}
{h(\cos \theta; a, b, c, u, v|q)} d\theta \\
&=\frac{2\pi(abcu, abcv, ad, bd, cd; q)_\infty}{(q, abcd, ab, ac, au, av,  bc, bu, bv,  cu,  cv; q)_\infty}\\
&\quad \times {_8W_7}\left(abcd/q; ab, ac, bc, d/u, d/v; q, uv\right).
\end{align*}	
\end{thm}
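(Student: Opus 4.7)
The plan is to factor the integrand so that the Askey--Wilson weight $W_{(a,b,c,d)}(\cos\theta|q)$ appears explicitly, times two quotients that Proposition~\ref{LiuAWG} expresses as generating series in the Askey--Wilson polynomials $p_n(\cos\theta;a,b,c,d|q)$, and then to collapse the resulting double series using the orthogonality relation of Theorem~\ref{AskWilOrth}. The decisive rewriting is
\[
\frac{h(\cos 2\theta;1|q)\,h(\cos\theta;d|q)}{h(\cos\theta;a,b,c,u,v|q)}
=W_{(a,b,c,d)}(\cos\theta|q)\cdot
\frac{h(\cos\theta;d|q)}{h(\cos\theta;u|q)}\cdot
\frac{h(\cos\theta;d|q)}{h(\cos\theta;v|q)},
\]
in which the extra $h(\cos\theta;d|q)$ factor in the numerator is paired with $1/h(\cos\theta;a,b,c,u,v|q)$ to reconstruct a full Askey--Wilson weight with parameters $(a,b,c,d)$, leaving behind two ratios tailor-made for the generating function.

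Next I would apply Proposition~\ref{LiuAWG} twice with the polynomial parameters $(a,b,c,d)$, once with $t=u/d$ and once with $t=v/d$, to obtain
\[
\frac{h(\cos\theta;d|q)}{h(\cos\theta;y|q)}
=F_y\sum_{n\ge 0}C_n(y)\,p_n(\cos\theta;a,b,c,d|q),\quad y\in\{u,v\},
\]
where
\[
F_y=\frac{(abcy,ad,bd,cd;q)_\infty}{(abcd,ay,by,cy;q)_\infty},\qquad
C_n(y)=\frac{(1-abcdq^{2n-1})(abcd/q,d/y;q)_n\,y^n}{(1-abcd/q)(q,ad,bd,cd,abcy;q)_n}.
\]
Substituting the product of these two expansions into the integral, interchanging summation and integration (legitimate since $|u|,|v|<1$ gives uniform convergence on $[0,\pi]$), and invoking the orthogonality relation of Theorem~\ref{AskWilOrth} annihilates every off-diagonal $(n,m)$ term and leaves
\[
I=K(a,b,c,d|q)\,F_uF_v\sum_{n\ge 0}C_n(u)C_n(v)\cdot\frac{(1-abcd/q)(q,ab,ac,ad,bc,bd,cd;q)_n}{(1-abcdq^{2n-1})(abcd/q;q)_n}.
\]

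The last step is bookkeeping. For the prefactor, one of the two copies of $(abcd;q)_\infty$ in $F_uF_v$ cancels against the factor in $K(a,b,c,d|q)$ and the duplicated $(ad,bd,cd;q)_\infty^2$ simplifies against $(ad,bd,cd;q)_\infty$ in the denominator of $K$, yielding exactly $2\pi(abcu,abcv,ad,bd,cd;q)_\infty/(q,abcd,ab,ac,au,av,bc,bu,bv,cu,cv;q)_\infty$. For the summand, the squared $(1-abcdq^{2n-1})^2$ and $(abcd/q;q)_n^2$ in $C_n(u)C_n(v)$ each lose one power to the orthogonality norm, the duplicated $(ad,bd,cd;q)_n^2$ combines with $(ad,bd,cd;q)_n$ in the numerator, and the coefficient of $(uv)^n$ reduces to
\[
\frac{(1-abcdq^{2n-1})(abcd/q,ab,ac,bc,d/u,d/v;q)_n}{(1-abcd/q)(q,ad,bd,cd,abcu,abcv;q)_n},
\]
which is precisely ${_8W_7}(abcd/q;ab,ac,bc,d/u,d/v;q,uv)$. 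The main obstacle is tracking this cancellation: several large $q$-Pochhammer symbols must line up to reproduce the very-well-poised structure on the right-hand side. Justifying termwise integration is routine since each generating-function series converges uniformly in $\theta\in[0,\pi]$ under $|u|,|v|<1$, and the hypothesis $|dt|<1$ of Proposition~\ref{LiuAWG} reduces to $|u|<1$ and $|v|<1$, both of which are given.
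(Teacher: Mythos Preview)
Your proof is correct and follows essentially the same route as the paper's: the paper introduces auxiliary variables $s,t$, expands both $h(\cos\theta;d|q)/h(\cos\theta;ds|q)$ and $h(\cos\theta;d|q)/h(\cos\theta;dt|q)$ via Proposition~\ref{LiuAWG}, multiplies by the Askey--Wilson weight, integrates, collapses the double sum using Theorem~\ref{AskWilOrth}, and only at the end substitutes $u=ds$, $v=dt$. You do the same thing working directly with $u,v$ (so your $t=u/d$ and $t=v/d$ are exactly the paper's $s$ and $t$); the algebra and the bookkeeping you describe match the paper's computation line for line.
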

\begin{proof}
For the sake of brevity,  we for the time being  use $p_n(\cos \theta|q)$ to denote $p_n(\cos \theta; a, b, c, d|q)$ and  $\Delta_n(t)$ to denote	
\[
\frac{(1-abcdq^{2n-1})(abcd/q, t^{-1})_n (dt)^n}{(1-abcd/q)(q, ad, bd, cd, abcdt; q)_n}.
\]
 Keeping  this in mind, we can rewrite the equation in Proposition~\ref{LiuAWG} in the form
\begin{align*}
\sum_{n=0}^\infty \Delta_n(t)
p_n(\cos \theta|q)
=\frac{(abcd, adt, bdt, cdt; q)_\infty h(\cos \theta; d)}{(abcdt, ad, bd, cd; q)_\infty h(\cos \theta; dt)}.
\end{align*}
If $t$ is replaced by $s$ in the above equation and making the variable change $n \to m$,
then, we obtain
\begin{align*}
\sum_{m=0}^\infty \Delta_m(s)
p_m(\cos \theta|q)
=\frac{(abcd, ads, bds, cds; q)_\infty h(\cos \theta; d)}{(abcdt, ad, bd, cd; q)_\infty h(\cos \theta; ds)}.
\end{align*}
Upon multiplying the two equations above together, we immediately conclude that
\begin{align*}
&\sum_{m=0}^\infty \sum_{n=0}^\infty\Delta_m(s)\Delta_n(t)
p_m(\cos \theta|q)p_n(\cos \theta|q)\\
&=\frac{(abcd, abcd,  ads, adt,  bds, bdt, cds, cdt; q)_\infty h(\cos \theta; d, d|q)}{(abcds,abcdt,  ad, ad,  bd, bd, cd, cd; q)_\infty h(\cos \theta; ds, dt)}.
\end{align*}
Multiplying the above equation by $W_{(a, b, c, d)}(\cos \theta|q)$ and then integrating with respect to $\theta$ over $[0, \pi]$, we deduce that
\begin{align*}
&\sum_{m=0}^\infty \sum_{n=0}^\infty\Delta_m(s)\Delta_n(t)
\int_{0}^{\pi} p_m(\cos \theta|q)p_n(\cos \theta|q)W_{(a, b, c, d)}(\cos \theta|q) d\theta\\
&=\frac{(abcd, abcd,  ads, adt,  bds, bdt, cds, cdt; q)_\infty }{(abcds,abcdt,  ad, ad,  bd, bd, cd, cd; q)_\infty}
\int_{0}^{\pi}
\frac{h(\cos 2\theta; 1) h(\cos \theta; d|q)}
{h(\cos \theta; a, b, c, ds, dt|q)} d\theta.
\end{align*}
Using the orthogonality relation for the Askey--Wilson polynomials to simplify the
left-hand side of the above equation, we find that
\begin{align*}
&K(a, b,c, d|q)\sum_{n=0}^\infty 
\frac{(1-abcd/q)(q, ab, ac, ad, bc, bd, cd; q)_n }{(1-abcdq^{2n-1})(abcd/q; q)_n}\Delta_n(s)\Delta_n(t)\\
&=K(a, b, c, d|q)~ {_8W_7} \left(abcd/q; ab, ac, bc, s^{-1}, t^{-1}; q, std^2\right).
\end{align*}
It follows that 
\begin{align*}
&\int_{0}^{\pi}
\frac{h(\cos 2\theta; 1) h(\cos \theta; d|q)}
{h(\cos \theta; a, b, c, ds, dt|q)} d\theta\\
&=\frac{2\pi (abcds, abcdt, ad, bd, cd; q)_\infty}{(q, abcd, ab, ac, bc, ads, bds, cds, adt, bdt, cdt; q)_\infty}\\
&\qquad \times {_8W_7} \left(abcd/q; ab, ac, bc, s^{-1}, t^{-1}; q, std^2\right).
\end{align*}
This completes the proof of Theorem~\ref{NassRahInt} after replacing $ds$ by $u$ and $dt$ by $v$.
\end{proof}	
When $d=abcuv,$  using Proposition~\ref{RogersSum},  the $_8W_7(\cdot)$ series in the Nassrallah--Rahman integral reduces to 
\[
_6W_5 \left(abcd/q; ab, ac, bc; q, uv\right)
=\frac{(abcd, bcuv, acuv, abuv; q)_\infty}
{(ad, bd, cd, uv; q)_\infty}.
\]
Hence Theorem~\ref{NassRahInt} reduces to the following $q$-integral formula, which was first derived by  Rahman \cite[Eq.(2.1)]{Rahman1986}.
\begin{thm}\label{RahInt} For $\{|a|, |b|, |c|,|u|, |v|\}<1,$ we have the integral formula	
	\begin{align*}
	&\int_{0}^{\pi} \frac{h(\cos 2\theta; 1|q)h(\cos \theta; abcuv|q)}
	{h(\cos \theta; a, b, c, u, v|q)} d\theta \\
	&=\frac{2\pi(abcu, abcv, abuv, acuv, bcuv; q)_\infty}{(q, ab, ac, au, av, bc, bu, bv, cu, cv, uv; q)_\infty}.
	\end{align*}	
\end{thm}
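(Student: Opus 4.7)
The plan is to obtain Theorem~\ref{RahInt} as a direct specialization of the Nassrallah--Rahman integral (Theorem~\ref{NassRahInt}), whose very-well-poised $_8W_7$ collapses to a summable $_6W_5$ precisely when $d=abcuv$.

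First I would set $d=abcuv$ in Theorem~\ref{NassRahInt}. The integrand then has the factor $h(\cos\theta; d|q)=h(\cos\theta; abcuv|q)$, which is exactly the numerator factor appearing in Theorem~\ref{RahInt}, so the left-hand side already matches. It remains to simplify the right-hand side. The $_8W_7$ in Theorem~\ref{NassRahInt} is
\[
{_8W_7}\bigl(abcd/q;\, ab,\, ac,\, bc,\, d/u,\, d/v;\, q,\, uv\bigr).
\]
The key observation is that with $d=abcuv$ the two last upper parameters satisfy $(d/u)(d/v)=d^2/(uv)=abcd=q\cdot(abcd/q)$, which is the well-poised product associated to the leading parameter. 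Equivalently, $d/u=abcv$ coincides with the denominator parameter $qa_1/(d/v)$, and $d/v=abcu$ coincides with $qa_1/(d/u)$. These two numerator/denominator cancellations remove both the $d/u$ and $d/v$ parameters, and the $_8W_7$ collapses to
\[
{_6W_5}\bigl(abcd/q;\, ab,\, ac,\, bc;\, q,\, uv\bigr),
\]
with argument $uv=d/(abc)=q\alpha/(bcd)$ in the notation of Proposition~\ref{RogersSum}, so Rogers' $_6\phi_5$ sum applies.

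Next I would invoke Proposition~\ref{RogersSum} with $\alpha=abcd/q$ and the three free parameters $ab, ac, bc$ to obtain the closed form
\[
{_6W_5}\bigl(abcd/q;\, ab,\, ac,\, bc;\, q,\, uv\bigr)
=\frac{(abcd,\, bcuv,\, acuv,\, abuv;q)_\infty}{(ad,\, bd,\, cd,\, uv;q)_\infty}.
\]
Substituting this back into the right-hand side of Theorem~\ref{NassRahInt} produces the product
\[
\frac{2\pi(abcu, abcv, ad, bd, cd;q)_\infty}{(q, abcd, ab, ac, au, av, bc, bu, bv, cu, cv;q)_\infty}\cdot\frac{(abcd, bcuv, acuv, abuv;q)_\infty}{(ad, bd, cd, uv;q)_\infty}.
\]
The factors $(abcd;q)_\infty$, $(ad;q)_\infty$, $(bd;q)_\infty$, $(cd;q)_\infty$ cancel between numerator and denominator, and what remains is precisely the right-hand side of Theorem~\ref{RahInt}.

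The only step that requires any genuine care is verifying the $_8W_7\to{_6W_5}$ reduction at $d=abcuv$: one must check that the two parameter cancellations really occur term-by-term in the very-well-poised series (rather than merely matching a product condition), and that the convergence $|uv|<1$ needed for Rogers' formula is implied by the hypothesis $\max\{|a|,|b|,|c|,|u|,|v|\}<1$. Both are routine; the rest of the argument is an infinite-product bookkeeping exercise.
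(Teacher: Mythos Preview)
Your proposal is correct and follows essentially the same route as the paper: specialize $d=abcuv$ in Theorem~\ref{NassRahInt}, observe that the $_8W_7$ collapses to a $_6W_5$, sum it by Rogers' formula (Proposition~\ref{RogersSum}), and simplify the infinite products. Your explicit verification that $d/u=abcv$ and $d/v=abcu$ coincide with the corresponding very-well-poised denominator parameters is more detailed than the paper's one-line remark, but the argument is identical.
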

Upon setting $d=0$ in the Nassrallah--Rahman integral in Theorem~\ref{NassRahInt}, we immediately
arrive at the Ismail--Stanton--Viennot integral \cite[Theorem~3.5]{IsmailStanton1987}.
\begin{thm}\label{IsmailStanton} For $\{|a|, |b|, |c|,|u|, |v|\}<1,$ we have the integral formula	
	\begin{align*}
	&\int_{0}^{\pi} \frac{h(\cos 2\theta; 1|q)}
	{h(\cos \theta; a, b, c, u, v|q)} d\theta \\
	&=\frac{2\pi(abcu, abcv; q)_\infty}{(q, ab, ac, au, av, bc, bu, bv, cu, cv; q)_\infty}
	{_3\phi_2\left({{ab, ac, bc}\atop{abcu, abcv}}; q, uv\right)}.
	\end{align*}	
\end{thm}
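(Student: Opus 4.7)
The plan is to substitute $d = 0$ directly into Theorem~\ref{NassRahInt} and track how each factor simplifies, as already hinted at in the sentence preceding the statement. The only nontrivial piece is reducing the $_8W_7$ series to a $_3\phi_2$.

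First, on the left-hand side of Theorem~\ref{NassRahInt}, the factor $h(\cos\theta; d|q)=(de^{i\theta},de^{-i\theta};q)_\infty$ becomes $(0,0;q)_\infty=1$ at $d=0$, so the integrand collapses to exactly that of Theorem~\ref{IsmailStanton}. Next, in the closed-form prefactor, each of $(ad;q)_\infty$, $(bd;q)_\infty$, $(cd;q)_\infty$, $(abcd;q)_\infty$ becomes $1$, leaving precisely
\[
\frac{2\pi(abcu,abcv;q)_\infty}{(q,ab,ac,au,av,bc,bu,bv,cu,cv;q)_\infty},
\]
which matches the prefactor claimed for Theorem~\ref{IsmailStanton}.

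The main step is the degeneration of the $_8W_7$. Using the very-well-poised convention, one computes $q(abcd/q)/(d/u)=abcu$ and $q(abcd/q)/(d/v)=abcv$, so
\[
{_8W_7}\left(abcd/q;ab,ac,bc,d/u,d/v;q,uv\right)
=\sum_{n=0}^{\infty}\frac{1-(abcd/q)q^{2n}}{1-abcd/q}\,\frac{(abcd/q,ab,ac,bc,d/u,d/v;q)_n}{(q,cd,bd,ad,abcu,abcv;q)_n}(uv)^n.
\]
At $d=0$, every Pochhammer symbol that contains a factor of $d$, namely $(abcd/q;q)_n$, $(d/u;q)_n$, $(d/v;q)_n$, $(cd;q)_n$, $(bd;q)_n$, $(ad;q)_n$, evaluates to $1$, and the very-well-poised balancing ratio $(1-(abcd/q)q^{2n})/(1-abcd/q)$ also becomes $1$. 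What remains is
\[
\sum_{n=0}^{\infty}\frac{(ab,ac,bc;q)_n}{(q,abcu,abcv;q)_n}(uv)^n
={_3\phi_2}\!\left({{ab,ac,bc}\atop{abcu,abcv}};q,uv\right).
\]

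Combining the three simplifications yields Theorem~\ref{IsmailStanton}. The only subtle step is identifying the denominator parameters $abcu$ and $abcv$ from the very-well-poised relations in the $_8W_7$; once that bookkeeping is done, everything else is immediate, since the specialization $d=0$ is analytic (the infinite products and the terminating-in-$d$ Pochhammers pose no convergence issues) and the hypothesis $\max\{|a|,|b|,|c|,|u|,|v|\}<1$ from Theorem~\ref{NassRahInt} is precisely the one stated in Theorem~\ref{IsmailStanton}.
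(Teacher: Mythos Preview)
Your proof is correct and follows exactly the same route as the paper, which simply says ``Upon setting $d=0$ in the Nassrallah--Rahman integral in Theorem~\ref{NassRahInt}, we immediately arrive at the Ismail--Stanton--Viennot integral.'' Your added bookkeeping (identifying the denominator parameters $cd,bd,ad,abcu,abcv$ via the very-well-poised relations and checking that all $d$-dependent Pochhammers and the factor $(1-(abcd/q)q^{2n})/(1-abcd/q)$ collapse to $1$) is accurate and makes explicit what the paper leaves implicit.
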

\section{some inequalities for $q$-series}
The following proposition is proved by us in \cite[ Proposition~3.1]{Liu2016JMAA}. For completeness,
we will repeat the proof of this proposition.
\begin{prop}\label{liuequality} If $0<q<1$ and $k$ is a nonnegative integer or $\infty$, $a$ and $b$ are two nonnegative numbers
	such that $0 \le b \le 1, $ then, we have
	\[
	(-ab; q)_k \le (-a; q)_\infty.
	\]
	If we further assume that $0\le a \le 1$,  then, we have the inequality
	\[
	(ab; q)_k \ge (a; q)_\infty.
	\]
	
\end{prop}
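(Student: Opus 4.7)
The plan is to prove both inequalities by comparing the infinite/finite products factor by factor. Write out
\[
(-ab;q)_k=\prod_{j=0}^{k-1}\bigl(1+abq^j\bigr),\qquad (ab;q)_k=\prod_{j=0}^{k-1}\bigl(1-abq^j\bigr),
\]
and do the same for the right-hand sides with $k=\infty$ and $b=1$. The whole argument rests on two elementary observations: first, $0\le b\le 1$ and $a,q^j\ge 0$ force $abq^j\le aq^j$; second, once all factors lie in the right range, extending a product from $k$ terms to $\infty$ either makes it larger (if each extra factor is $\ge 1$) or smaller (if each extra factor is in $[0,1]$).

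For the first inequality, each factor $1+abq^j$ is at most $1+aq^j$, so $(-ab;q)_k\le\prod_{j=0}^{k-1}(1+aq^j)$. Every $1+aq^j\ge 1$, so appending the tail of factors for $j\ge k$ only increases the product, giving $(-ab;q)_k\le(-a;q)_\infty$. This step works uniformly whether $k$ is finite or $\infty$.

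For the second inequality I additionally use the hypothesis $0\le a\le 1$, which guarantees $0\le abq^j\le aq^j\le 1$, so $1-abq^j\ge 1-aq^j\ge 0$. Thus
\[
(ab;q)_k=\prod_{j=0}^{k-1}\bigl(1-abq^j\bigr)\ge \prod_{j=0}^{k-1}\bigl(1-aq^j\bigr),
\]
and since each remaining factor $1-aq^j$ lies in $[0,1]$, extending the product to $j=\infty$ can only decrease it, yielding $(ab;q)_k\ge(a;q)_\infty$, as desired.

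I do not anticipate any real obstacle here; the only subtlety is making sure the sign and range conditions on $a,b,q$ are used correctly so that the factorwise comparisons go the right way, and that the tail comparison uses the correct monotonicity direction in each of the two cases. The argument is identical for the finite-$k$ and $k=\infty$ cases, so no separate treatment is needed.
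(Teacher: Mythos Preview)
Your argument is correct and follows essentially the same route as the paper: factorwise comparison $1\pm abq^j$ versus $1\pm aq^j$, followed by the observation that the tail factors $(1\pm aq^j)$ for $j\ge k$ only push the product in the desired direction. The paper in fact omits the details for the second inequality, which you have written out explicitly.
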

\begin{proof} Keeping  that $0<q<1$ in mind, we find that for  $j\in \{ 0, 1,\ldots, k-1 \}$,
	\[
	1+abq^j \le 1+aq^j.
	\]
	Upon multiplying these $k$ inequalities together, we immediately deduce that
	\[
	(-ab; q)_k \le (-a; q)_k.
	\]
	Since $(-aq^k; q)_\infty\ge 1,$ we multiply $(-aq^k; q)_\infty$ to the right-hand side of the above inequality
	to arrive at the first inequality in the proposition. In the same way we can prove the second inequality.
	This completes the proof of Proposition~\ref{liuequality}.
\end{proof}
\begin{prop}\label{convergenceseries} If $\max\{|b_1|, |b_2|, \ldots, |b_r|\}<1, |x|\le \lambda<1$ and $n$ is a nonnegative integer,
	then, we have
	\[
	q^{n\choose 2}\left|{_{r+1}\phi_r}\left({{q^{-n},  a_1q^n, a_2,  \ldots, a_r }\atop{b_1, \ldots b_r }}; q, qx\right)\right|
	\le \frac{(-|a_1\lambda|, -q, -|a_2|, \ldots, -|a_r|; q)_\infty}{(|\lambda|, |b_1|, \ldots, |b_r|; q)_\infty}.
	\]
\end{prop}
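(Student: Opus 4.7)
The plan is to expand the ${_{r+1}\phi_r}$ series---which terminates at $k=n$ because $(q^{-n};q)_k=0$ for $k>n$---and estimate it term-by-term. The starting point is the standard identity
\[
(q^{-n};q)_k=(-1)^k q^{\binom{k}{2}-nk}\,\frac{(q;q)_n}{(q;q)_{n-k}},
\]
combined with the elementary exponent check $\binom{n}{2}+\binom{k}{2}+k-nk=\binom{n-k}{2}$, which yields
\[
q^{\binom{n}{2}}(q^{-n};q)_k (qx)^k=(-x)^k q^{\binom{n-k}{2}}\,\frac{(q;q)_n}{(q;q)_{n-k}}.
\]
After this substitution $q^{\binom{n}{2}}\cdot {_{r+1}\phi_r}(\cdots)$ becomes an explicit polynomial of degree $n$ in $x$.

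I would then pass to absolute values and apply Proposition~\ref{liuequality} factor by factor: $|(a_j;q)_k|\le (-|a_j|;q)_\infty$ for $j\ge 2$, $|(b_j;q)_k|\ge (|b_j|;q)_\infty$ for $j=1,\ldots,r$, $|(a_1q^n;q)_k|\le (-|a_1|q^n;q)_k$, and $|x|^k\le\lambda^k$. Pulling out the constants $(-|a_j|;q)_\infty$ and $(|b_j|;q)_\infty$, one is reduced to estimating
\[
\sum_{k=0}^{n}\frac{(q;q)_n\,q^{\binom{n-k}{2}}}{(q;q)_k(q;q)_{n-k}}\,(-|a_1|q^n;q)_k\,\lambda^k.
\]
The crux of the argument is the elementary inequality
\[
\frac{(q;q)_n\,q^{\binom{n-k}{2}}}{(q;q)_{n-k}}=(q^{n-k+1};q)_k\,q^{\binom{n-k}{2}}\le 1\le (-q;q)_\infty,
\]
since every factor of $(q^{n-k+1};q)_k$ lies in $(0,1]$ and $q^{\binom{n-k}{2}}\le 1$.

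Extracting the factor $(-q;q)_\infty$ in this way, what remains is $\sum_{k=0}^{n}(-|a_1|q^n;q)_k\lambda^k/(q;q)_k$, which I would dominate by the corresponding infinite sum and evaluate via the $q$-binomial theorem ${_1\phi_0}(a;-;q,z)=(az;q)_\infty/(z;q)_\infty$ (valid for $|\lambda|<1$) as $(-|a_1|\lambda q^n;q)_\infty/(\lambda;q)_\infty$. A final appeal to Proposition~\ref{liuequality} gives $(-|a_1|\lambda q^n;q)_\infty\le (-|a_1\lambda|;q)_\infty$, and reassembling with the constants pulled out earlier produces exactly the right-hand side of the proposition. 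The only genuinely delicate step is the Gaussian-binomial inequality in the middle paragraph; once it is in hand, the rest is routine bookkeeping with Proposition~\ref{liuequality} and the $q$-binomial theorem.
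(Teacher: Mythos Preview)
Your argument is correct and follows essentially the same route as the paper's proof: both rewrite the terminating series using $(q^{-n};q)_k(qx)^k=(-x)^k q^{\binom{n-k}{2}}(q^{n-k+1};q)_k$, bound each Pochhammer factor via Proposition~\ref{liuequality}, and finish with the $q$-binomial theorem. The only cosmetic differences are that the paper drops the $q^n$ on $a_1$ \emph{before} summing (bounding $(-|a_1|q^n;q)_k\le(-|a_1|;q)_k$) while you keep it and drop it \emph{after} the $q$-binomial evaluation, and the paper bounds $(q^{n-k+1};q)_k\le(-q;q)_\infty$ via the triangle inequality rather than your direct observation that each factor lies in $(0,1]$; neither change affects the structure or the final bound.
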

 \begin{proof}
 By a simple evaluation, we easily find that 
 (see, for example \cite[Eq.(1.2.37)]{GasperRahman2004})
 \[
 (q^{-n}; q)_k=(-1)^k q^{-nk+{k\choose 2}}
 (1-q^n)(1-q^{n-1})\cdots (1-q^{n-k+1}).
 \]
 It follows that
 \begin{align*}
 &	q^{n\choose 2}{_{r+1}\phi_r}\left({{q^{-n},  a_1q^n, a_2,  \ldots, a_r }\atop{b_1, \ldots b_r }}; q, qx\right)\\
 &=\sum_{k=0}^n (-1)^k q^{{n-k}\choose 2}
 \frac{(q^{n-k+1}, a_1q^n, a_2, \ldots, a_r; q)_k}{(q, b_1,\ldots, b_r; q)_k} x^k.
 \end{align*}
  Keeping that $0<q<1$ in mind, noting that $0<q^{{n-k}\choose 2}\le 1$ and using the triangle inequality, we immediately find that
 \begin{align*}
 	&q^{n\choose 2}\left|{_{r+1}\phi_r}\left({{q^{-n},  a_1q^n, a_2,  \ldots, a_r }\atop{b_1, \ldots b_r }}; q, qx\right)\right|\\
 	&\le \sum_{k=0}^n 
 	\frac{|(q^{n-k+1}, a_1q^n, a_2, \ldots, a_r; q)_k|}{(q; q)_k|(q,  b_1,\ldots, b_r; q)_k|} |x|^k.
 \end{align*}
  Keeping that $0<q<1$ in mind,  using the triangle inequality and the first inequality in Proposition~\ref{liuequality} we deduce that
 \[
 |(a_1q^n; q)_k|\le (-|a_1|q^n; q)_k \le (-|a_1|; q)_k.
 \]
 Noting that $0<q^{n-k}\le 1$ for $k\in\{0, 1, \ldots, n\}$, using the triangle inequality and  the first inequality in Proposition~\ref{liuequality} we find that
 \[
 (q^{n-k+1}; q)_k\le (-q^{n-k+1}; q)_k\le (-q; q)_\infty.
 \] 

 Upon making the triangle inequality and noting that $(1+|a_j|q^l)\ge 1$, we see that, for $j\in\{2, 3,\ldots, r\}$,
 \[
 |(a_j; q)_k|\le\prod_{l=0}^{k-1}(1+|a_j|q^l)
 \le \prod_{l=0}^{\infty}(1+|a_j|q^l).
 \]
  Using the second inequality in Proposition~\ref{liuequality}, we find that, for $j\in\{1, 2,\ldots, r\}$, 
 \[
 |(b_j; q)_k|\ge (|b_j|; q)_\infty.
 \]
 It follows that
 \[
 \frac{|(q^{n-k+1}, a_1q^n, a_2, \ldots, a_r; q)_k|}{(q; q)_k|(q,  b_1,\ldots, b_r; q)_k|} |x|^k 
 \le \frac{(-q, -|a_2|, \ldots,-|a_r|; q)_\infty (-|a_1|; q)_k}{(|b_1|, \ldots, |b_r|;q)_\infty (q; q)_k}|\lambda|^k.
 \]
 Using this inequality the triangle inequality, we conclude that
 \begin{align*}
 &q^{n\choose 2}\left|{_{r+1}\phi_r}\left({{q^{-n},  a_1q^n, a_2,  \ldots, a_r }\atop{b_1, \ldots b_r }}; q, qx\right)\right|\\
 &\le \frac{(-q, -|a_2|, \ldots,-|a_r|; q)_\infty }{(|b_1|, \ldots, |b_r|;q)_\infty}
 \sum_{k=0}^\infty \frac{(-|a_1|; q)_k |\lambda|^k}{(q;q)_k}.
 \end{align*}
 Upon applying the $q$-binomial theorem to the right-hand side of the above inequality,
 we complete the proof of Proposition~\ref{convergenceseries}.
 \end{proof}

\begin{prop}\label{Liubounded} If $k$ is a nonnegative integer, then, for $\max\{|a|, |b|, |c|,|d|, |u|\}<1, A_k(\theta)$ is bounded on 
$[-\pi,\pi]$,where $A_k(\theta)$ is given by
\[
A_k(\theta)=\frac{h(\cos 2\theta; 1|q) h(\cos\theta; w|q)(te^{i\theta}, te^{-i\theta};q)_k}{h(\cos \theta; a, b,c, d,u|q)}.
\]	
\end{prop}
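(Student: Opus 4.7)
The plan is to derive a uniform bound on $|A_k(\theta)|$ for $\theta\in[-\pi,\pi]$ by separately estimating the numerator from above and the denominator from below, using the triangle inequality in exactly the style of Proposition~\ref{liuequality}. The guiding observation is that the hypothesis $\max\{|a|,|b|,|c|,|d|,|u|\}<1$ prevents any factor in the denominator from vanishing on the compact interval $[-\pi,\pi]$, so the quotient is bounded; the task is merely to make this quantitative.

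For the denominator, I would note that for any complex $x$ with $|x|<1$ and any $\theta\in[-\pi,\pi]$, every factor of $h(\cos\theta;x|q)$ satisfies $|1-xe^{\pm i\theta}q^j|\geq 1-|x|q^j>0$ for each $j\geq 0$. Passing to the infinite product (which converges absolutely because $|xq^j|\to 0$ geometrically) yields
\[
|h(\cos\theta;x|q)|\geq (|x|;q)_\infty^2>0,
\]
which is precisely the type of lower bound supplied by the second inequality in Proposition~\ref{liuequality}. Multiplying these estimates over $x\in\{a,b,c,d,u\}$ gives a strictly positive constant lower bound on $|h(\cos\theta;a,b,c,d,u|q)|$ uniform in $\theta$.

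For the numerator, the triangle inequality gives $|1-xe^{\pm i\theta}q^j|\leq 1+|x|q^j$, so the first inequality in Proposition~\ref{liuequality} produces
\[
|h(\cos 2\theta;1|q)|\leq (-1;q)_\infty^2,\qquad |h(\cos\theta;w|q)|\leq (-|w|;q)_\infty^2,
\]
while the finite factor admits the bound $|(te^{i\theta},te^{-i\theta};q)_k|\leq (-|t|;q)_k^2$. Each of these is a finite constant independent of $\theta$.

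Dividing the upper bound on the numerator by the lower bound on the denominator produces a finite constant that majorises $|A_k(\theta)|$ uniformly for $\theta\in[-\pi,\pi]$. I do not anticipate a substantial obstacle: the argument is essentially a mechanical application of the triangle inequality, with the key point being that the condition $\max\{|a|,|b|,|c|,|d|,|u|\}<1$ is exactly what is needed to convert the infinite products in the denominator into strictly positive lower bounds via Proposition~\ref{liuequality}.
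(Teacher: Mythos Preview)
The paper states this proposition without supplying a proof, so there is no argument to compare against directly. Your proof is correct and is precisely the routine estimate the author presumably intended the reader to supply: bound each factor in the numerator above via $|1-z|\le 1+|z|$ and each factor in the denominator below via $|1-z|\ge 1-|z|$ (valid there because the hypothesis $\max\{|a|,|b|,|c|,|d|,|u|\}<1$ forces $|z|<1$), then pass to the infinite products to obtain a finite constant independent of $\theta$. Your appeal to Proposition~\ref{liuequality} is appropriate in spirit; strictly speaking that proposition is stated for nonnegative real arguments, whereas here the factors $xe^{\pm i\theta}$ are complex, but the reverse triangle inequality you actually invoke is the obvious complex analogue and your write-up applies it correctly. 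One minor remark: your bound $(-|t|;q)_k^2$ on the finite factor depends on $k$, but since the proposition only asserts boundedness for each fixed $k$ this is harmless (and could be replaced by $(-|t|;q)_\infty^2$ if a $k$-uniform bound were desired).
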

\section{the proof of Theorem~\ref{Liu12parametersint}}
The following general $q$-transformation formula is proved by us in \cite[Theorem~1.6]{Liu2013RamJ}.
\begin{prop}\label{Liu2013RJ} For an arbitrary sequence $\{A_n\}$ of complex numbers, under suitable convergence conditions, we have 
\begin{align*}
&\frac{(\alpha q, \alpha uv/q; q)_\infty}
{(\alpha u,\alpha v; q)_\infty}\sum_{n=0}^\infty A_n (q/u; q)_n (\alpha u)^n\\
&=\sum_{n=0}^\infty \frac{(1-\alpha q^{2n})(\alpha, q/u, q/v; q)_n (-\alpha uv/q)^n q^{n(n-1)/2}}{(1-\alpha)(q, \alpha u, \alpha v; q)_n}\\
&\qquad \qquad \qquad \qquad \qquad \qquad \times \sum_{k=0}^n 
\frac{(q^{-n}, \alpha q^n; q)_k(q^2/v)^k}{(q/v; q)_k}A_k.
\end{align*}	
\end{prop}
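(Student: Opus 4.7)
The plan is to reduce the identity, which is linear in the sequence $\{A_n\}$, to verifying that the coefficient of $A_k$ on each side agrees for every fixed $k\ge 0$. Under the stated convergence conditions, I would interchange the order of the double summation on the right-hand side, grouping the terms by the inner index $k$. After the swap, the coefficient of $A_k$ takes the form
\begin{equation*}
C_k = \frac{(q^2/v;q)^k_{\phantom{0}}}{(q/v;q)_k}\sum_{n=k}^{\infty}\frac{(1-\alpha q^{2n})(\alpha,q/u,q/v;q)_n(q^{-n},\alpha q^n;q)_k}{(1-\alpha)(q,\alpha u,\alpha v;q)_n}\left(-\frac{\alpha uv}{q}\right)^{n}q^{\binom{n}{2}},
\end{equation*}
and the target is to prove $C_k = \dfrac{(\alpha q,\alpha uv/q;q)_\infty}{(\alpha u,\alpha v;q)_\infty}(q/u;q)_k(\alpha u)^k$.

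Next, I would substitute $n=k+m$ with $m\ge 0$ and use the elementary rearrangements $(q^{-n};q)_k=(-1)^kq^{\binom{k}{2}-nk}(q;q)_n/(q;q)_{n-k}$ together with $(\alpha q^n;q)_k=(\alpha;q)_{n+k}/(\alpha;q)_n$ to pull every $m$-independent quantity out of the sum. Introducing the shifted parameters $\alpha'=\alpha q^{2k}$, $B=q^{k+1}/u$, and $C=q^{k+1}/v$, a direct check gives $\alpha' q/(BC)=\alpha uv/q$, and the residual sum over $m$ collapses to
\begin{equation*}
\sum_{m=0}^{\infty}\frac{(1-\alpha'q^{2m})(\alpha',B,C;q)_m}{(1-\alpha')(q,\alpha'q/B,\alpha'q/C;q)_m}\left(-\frac{\alpha'q}{BC}\right)^{m}q^{\binom{m}{2}}.
\end{equation*}
This is precisely the limiting form of Rogers' $_6\phi_5$ summation (Proposition~\ref{RogersSum}) obtained by sending one upper parameter to infinity, and so it evaluates in closed form to $(\alpha' q,\alpha' q/(BC);q)_\infty/(\alpha' q/B,\alpha' q/C;q)_\infty = (\alpha q^{2k+1},\alpha uv/q;q)_\infty/(\alpha uq^k,\alpha vq^k;q)_\infty$.

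The final step is mechanical bookkeeping: using the cancellations $(\alpha u,\alpha v;q)_k(\alpha uq^k,\alpha vq^k;q)_\infty=(\alpha u,\alpha v;q)_\infty$ and $(\alpha;q)_{2k}(1-\alpha q^{2k})(\alpha q^{2k+1};q)_\infty/(1-\alpha)=(\alpha q;q)_\infty$, and noting that the accumulated $q$-powers collapse via $q^{2\binom{k}{2}-k^2}=q^{-k}$, all $k$-dependent prefactors combine into exactly $(q/u;q)_k(\alpha u)^k$ times the claimed infinite-product prefactor, giving $C_k$ as required. The main obstacle, to my mind, is not the algebra but the justification of the summation interchange: because $\{A_n\}$ is arbitrary, the phrase \emph{suitable convergence conditions} must be read as imposing enough decay on $A_n$ (helped by the strong suppression from the factor $(-\alpha uv/q)^n q^{\binom{n}{2}}$) to make the rearranged double series absolutely convergent. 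Once that is secured, linearity delivers the identity in full generality.
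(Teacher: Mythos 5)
Your proof is correct. Note first that the paper itself gives no proof of Proposition~\ref{Liu2013RJ}: it is quoted from \cite[Theorem~1.6]{Liu2013RamJ}, where it is obtained from a general $q$-series expansion theorem, so your argument is necessarily a different (and, within this paper, more self-contained) route. Your strategy --- exploit linearity in $\{A_n\}$, interchange the double sum, and evaluate the coefficient $C_k$ of $A_k$ --- checks out in detail: after the shift $n=k+m$ and the standard rewritings of $(q^{-n};q)_k$ and $(\alpha q^n;q)_k$, the residual $m$-sum is exactly the $d\to\infty$ limit of Rogers' ${}_6\phi_5$ (Proposition~\ref{RogersSum}), giving $(\alpha q^{2k+1},\alpha uv/q;q)_\infty/(\alpha uq^k,\alpha vq^k;q)_\infty$, and the remaining prefactors do collapse as you say: $(q^2/v)^k(-1)^k(-\alpha uv/q)^k q^{2\binom{k}{2}-k^2}=(\alpha u)^k$ and $(\alpha;q)_{2k}(1-\alpha q^{2k})(\alpha q^{2k+1};q)_\infty/(1-\alpha)=(\alpha q;q)_\infty$. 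Two small remarks. First, the expression $(q^2/v;q)^k$ in your display for $C_k$ is a notational slip for the scalar power $(q^2/v)^k$; your subsequent bookkeeping treats it correctly. Second, you are right that the only genuine issue is the legitimacy of the interchange: since $(q^{-n};q)_k\,q^{\binom{n}{2}}$ is only bounded (not small) when $k$ is near $n$, absolute convergence of the rearranged double series really does require decay hypotheses on $A_n$ of the kind quantified in Propositions~\ref{liuequality} and~\ref{convergenceseries}; the blanket phrase ``under suitable convergence conditions'' in the statement is precisely the license to assume this, and in the paper's applications $A_k$ is an explicit ratio of $q$-shifted factorials for which the needed bounds are verified.
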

It is easily seen that  by choosing 
\[
A_k=\frac{(q/v, \beta,\delta; q)_k (vz/q)^k}
{(q, s, t, h; q)_k}
\]
in Proposition~\ref{Liu2013RJ}, we can derive the following proposition, which was used to prove may of the Hecke-type series identities 
in \cite{Liu2013IntJNT}.
\begin{prop}\label{Liu2013IJNT} For $\max\{|\alpha a|, |\alpha b|, |\alpha uvz/q|\}<1$, we have the $q$-formula 
	\begin{align*}
	&\frac{(\alpha q, \alpha uv/q; q)_\infty}
	{(\alpha u,\alpha v; q)_\infty}
	{_4\phi_3}\left({{q/u,q/v,\beta,\delta}\atop{s, t, h}}; q, \frac{\alpha uvz}{q}\right)\\
	&=\sum_{n=0}^\infty \frac{(1-\alpha q^{2n})(\alpha, q/u, q/v; q)_n (-\alpha uv/q)^n q^{n(n-1)/2}}{(1-\alpha)(q, \alpha u, \alpha v; q)_n}\\
	&\qquad \qquad \qquad \qquad  \times {_4\phi_3}\left({{q^{-n}, a^2bcdrq^{n-1},\beta,\delta}\atop{s, t, h}}; q, qz\right),
	\end{align*}
	provided the right-hand side of the above equation is an absolutely convergent series.	
\end{prop}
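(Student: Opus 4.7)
The plan is to obtain Proposition~\ref{Liu2013IJNT} as an immediate specialization of Proposition~\ref{Liu2013RJ} by taking the arbitrary sequence to be
\[
A_k=\frac{(q/v,\beta,\delta;q)_k}{(q,s,t,h;q)_k}\Bigl(\frac{vz}{q}\Bigr)^k,
\]
followed by a small amount of bookkeeping on powers of $q$ and a convergence check via the inequalities of Section~3.

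First I would compute the left-hand side of Proposition~\ref{Liu2013RJ}. Multiplying $A_n$ by $(q/u;q)_n(\alpha u)^n$ gives
\[
A_n(q/u;q)_n(\alpha u)^n=\frac{(q/u,q/v,\beta,\delta;q)_n}{(q,s,t,h;q)_n}\Bigl(\frac{\alpha uvz}{q}\Bigr)^n,
\]
so the $n$-sum collapses to ${}_4\phi_3(q/u,q/v,\beta,\delta;s,t,h;q,\alpha uvz/q)$, producing exactly the left-hand side of Proposition~\ref{Liu2013IJNT} after multiplying by the prefactor $(\alpha q,\alpha uv/q;q)_\infty/(\alpha u,\alpha v;q)_\infty$.

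Next, on the right-hand side of Proposition~\ref{Liu2013RJ}, the inner sum
\[
\sum_{k=0}^n\frac{(q^{-n},\alpha q^n;q)_k(q^2/v)^k}{(q/v;q)_k}\,A_k
\]
simplifies because the $(q/v;q)_k$ factors cancel and the powers combine via $(q^2/v)^k(vz/q)^k=(qz)^k$, yielding the terminating series
\[
{_4\phi_3}\!\left({{q^{-n},\alpha q^n,\beta,\delta}\atop{s,t,h}};q,qz\right).
\]
Recalling that in the context leading up to Theorem~\ref{Liu12parametersint} one has $\alpha=a^2bcdr/q$, so $\alpha q^n=a^2bcdr\,q^{n-1}$, this matches the inner ${}_4\phi_3$ appearing in Proposition~\ref{Liu2013IJNT} term-by-term. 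The outer factor $(1-\alpha q^{2n})(\alpha,q/u,q/v;q)_n(-\alpha uv/q)^nq^{n(n-1)/2}/[(1-\alpha)(q,\alpha u,\alpha v;q)_n]$ is transported unchanged from Proposition~\ref{Liu2013RJ}.

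Finally I would verify the ``suitable convergence conditions'' under which Proposition~\ref{Liu2013RJ} is being applied. The inner terminating ${}_4\phi_3$ on the right is estimated by Proposition~\ref{convergenceseries}: with $|qz|\le q\lambda<1$ its absolute value times $q^{\binom{n}{2}}$ is bounded uniformly in $n$ by an infinite-product constant depending on $\alpha,\beta,\delta,s,t,h$. Combined with the factor $q^{n(n-1)/2}$ in the outer sum and the hypothesis that the right-hand side of Proposition~\ref{Liu2013IJNT} converges absolutely, this gives uniform absolute convergence of the double series and legitimizes the term-by-term substitution of $A_k$ into Proposition~\ref{Liu2013RJ}. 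I expect this convergence bookkeeping to be the only nontrivial point; the algebraic step is a straightforward pattern-match, which is the reason the author simply records it as ``we can derive the following proposition.''
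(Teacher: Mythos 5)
Your proposal is correct and is exactly the paper's own derivation: the author obtains Proposition~\ref{Liu2013IJNT} by substituting the very same sequence $A_k=\frac{(q/v,\beta,\delta;q)_k(vz/q)^k}{(q,s,t,h;q)_k}$ into Proposition~\ref{Liu2013RJ}, with the cancellation of $(q/v;q)_k$ and the combination $(q^2/v)^k(vz/q)^k=(qz)^k$ yielding the inner terminating ${}_4\phi_3$. Your added convergence bookkeeping via Proposition~\ref{convergenceseries} is more explicit than the paper's ``under suitable convergence conditions,'' but it is consistent with how the paper handles the same issue in the proof of Theorem~\ref{Liu12parametersint}.
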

 Now we begin prove Theorem~\ref{Liu12parametersint} by using Propositions~\ref{liuequality}, \ref{convergenceseries}, \ref{Liubounded} and \ref{Liu2013IJNT},  as well as the Nassrallah--
Rahman integral in Theorem~\ref{RahInt}.
\begin{proof}
For notational simplicity, we will for the time being make use of $B_n$ to denote 
\begin{align*}
&\frac{(1-\alpha q^{2n})(\alpha, q/u, q/v; q)_n (-\alpha uv/q)^n q^{n(n-1)/2}}{(1-\alpha)(q, \alpha u, \alpha v; q)_n}\\
&\qquad \qquad \qquad  \times {_4\phi_3}\left({{q^{-n}, a^2bcdrq^{n-1},\beta,\delta}\atop{s, t, h}}; q, qz\right).
\end{align*}
Upon taking $\alpha=a^2bcdr/q, u=qe^{i\theta}/a$ and $v=qe^{-i\theta}/a$ in Proposition~\ref{Liu2013IJNT}, we find that
\begin{align}
&\sum_{n=0}^\infty \frac{(ae^{i\theta}, ae^{-i\theta};q)_n}{(abcdre^{i\theta}, abcdre^{-i\theta}; q)_n}B_n\label{liu:eqn1}\\
&=\frac{(a^2bcdr, bcdr; q)_\infty}{(abcdre^{i\theta}, abcdre^{-i\theta}; q)_\infty}
{_4\phi_3}\left({{ae^{i\theta}, ae^{-i\theta},\beta,\delta}\atop{s, t, h}}; q, bcdrz\right).\nonumber
\end{align}
Using  the triangular inequality and a simple calculation, we deduce
that
\begin{equation}
|(ae^{i\theta}, ae^{-i\theta}; q)_n| \le 
(-|a|; q)_\infty^2. \label{liu:eqn2}
\end{equation}
Keeping that $\max\{|a|,|b|,|c|,|d|,|r|\}<1$ in mind and using the triangular inequality, we have 
\begin{equation}
|(abcdre^{i\theta}, abcdre^{-i\theta}; q)_n|\ge 
(|abcdr|; q)^2_\infty.
 \label{liu:eqn3}
\end{equation}
It follows that
\[
\Big| \frac{(ae^{i\theta}, ae^{-i\theta};q)_n}{(abcdre^{i\theta}, abcdre^{-i\theta}; q)_n}\Big|
\le \frac{(-|a|; q)_\infty^2}{(|abcdr|; q)^2_\infty}.
\]
Assuming that $0<\lambda<1$ and $|z|\le \lambda$, then by Proposition~\ref{convergenceseries}, we find that
\[
q^{n(n-1)/2}\big|{_4\phi_3}\left({{q^{-n}, a^2bcdrq^{n-1},\beta,\delta}\atop{s, t, h}}; q, qz\right)\Big|
\]
is bounded by some constant independent of $n$ and $z$. Hence by the Hence by the ratio test we
know that the series on the left-hand side of 
(\ref{liu:eqn1}) is an absolutely and uniformly
convergent series of two variables $z$ and $\theta$ for $\max\{|a|,|b|,|c|,|d|,|r|\}<1$ and $|z|\le \lambda$ , where $0<\lambda<1.$

 Using Proposition~\ref{Liubounded}, with $t=0,$  we know that the following function of $\theta$ is bounded by some constant independent of $\theta:$
\[
\frac{h(\cos 2\theta; 1|q)h(\cos \theta; abcdr|q)}{h(\cos \theta; a, b, c, d, r|q)}.
\]
Upon multiplying both sides of (\ref{liu:eqn1}) by the above function of $\theta,$  using the identity $(x; q)_n (xq^n; q)_\infty=(x; q)_\infty$ in  \cite[Eq.(1.2.30)]{GasperRahman2004}, we obtain
\begin{align*}
&\frac{h(\cos 2\theta; 1|q)}{h(\cos \theta; a, b, c, d, r|q)}
{_4\phi_3}\left({{ae^{i\theta}, ae^{-i\theta},\beta,\delta}\atop{s, t, h}}; q, bcdrz\right)\\
&=\frac{1}{(a^2bcdr, bcdr; q)_\infty}\sum_{n=0}^\infty\frac{h(\cos 2\theta; 1|q)h(\cos \theta; abcdrq^n|q)}{h(\cos \theta; a, b, c, d, r|q)} B_n.
\end{align*}
Through the above discussion, we know that the right-hand side of the above equation
is convergent uniformly and absolutely on $[0, \pi]$. Hence we can integrate the above
equation term by term over $[0,\pi]$ to obtain
\begin{align*}
&\int_{0}^{\pi}\frac{h(\cos 2\theta; 1|q)}{h(\cos \theta; a, b, c, d, r|q)}
{_4\phi_3}\left({{ae^{i\theta}, ae^{-i\theta},\beta,\delta}\atop{s, t, h}}; q, bcdrz\right)d\theta\\
&=\frac{1}{(a^2bcdr, bcdr; q)_\infty}\sum_{n=0}^\infty B_n \int_{0}^\pi \frac{h(\cos 2\theta; 1|q)h(\cos \theta; abcdrq^n|q)}{h(\cos \theta; a, b, c, d, r|q)} d\theta.
\end{align*}
Using the Nassrallah--Rahman $q$-beta integral in Theorem~\ref{RahInt}, we immediately conclude that
\begin{align*}
&\int_{0}^\pi \frac{h(\cos 2\theta; 1|q)h(\cos \theta; abcdrq^n|q)}{h(\cos \theta; a, b, c, d, r|q)} d\theta\\
&=\frac{2\pi (abcd, abcr, abdr, acdr, bcdr; q)_\infty(ab, ac, ad, ar; q)_n}{(q, ab, ac, ad, ar, bc, bd, br, cd, cr, dr; q)_\infty
(abcd, abcr, abdr, acdr; q)_n}.
\end{align*}
Combining the two equations above we complete the proof of Theorem~\ref{Liu12parametersint}. From the process of proof of Theorem~\ref{Liu12parametersint} , we find that this theorem also holds when $|z|=1$,
provided the series on the right-hand side of the above equation converges absolutely
for $|z|=1$.
\end{proof}
\section{the proof of Theorem~\ref{Liu12parametersdoub}}
\begin{proof}
 It follows that from (\ref{liu:eqn2}), by the ratio test, we find that the following series is absolutely and uniformly
convergent on $[0,\pi]$ when $|bcdrz|<1:$
\[
{_4\phi_3}\left({{ae^{i\theta}, ae^{-i\theta},\beta,\delta}\atop{s, t, h}}; q, bcdrz\right).
\]
 Using Proposition~\ref{Liubounded}, with $t=w=0,$ we know that the following function of $\theta$ is bounded by some constant independent of $\theta:$
\[
\frac{h(\cos 2\theta; 1|q)}{h(\cos \theta; a, b, c, d, r|q)}.
\]
Hence we can evaluate the  integral on the left-hand side of the equation in Theorem~\ref{Liu12parametersint} term by term to obtain
\begin{align*}
&\int_{0}^{\pi}\frac{h(\cos 2\theta; 1|q)}{h(\cos \theta; a, b, c, d, r|q)}
{_4\phi_3}\left({{ae^{i\theta}, ae^{-i\theta},\beta,\delta}\atop{s, t, h}}; q, bcdrz\right)d\theta\\
&=\sum_{n=0}^\infty \frac{(\beta, \delta; q)_n (bcdrz)^n}{(q, s, t, h; q)_n}
\int_{0}^{\pi}\frac{h(\cos 2\theta; 1|q)}{h(\cos \theta; aq^n, b, c, d, r|q)}
d\theta.
\end{align*}
Using the Ismail--Stanton--Viennot integral in Theorem~\ref{IsmailStanton}, we immediately deduce that
\begin{align*}
&\int_{0}^{\pi}\frac{h(\cos 2\theta; 1|q)}{h(\cos \theta; aq^n, b, c, d, r|q)}
d\theta\\
&=\frac{2\pi(abcdq^n, abcrq^n; q)_\infty}
{(q, abq^n, acq^n, adq^n, arq^n, bc, bd,br, cd, cr; q)_\infty}
{_3\phi_2}\left({{abq^n, acq^n, bc}\atop{abcdq^n, abcrq^n}}; q, dr\right).
\end{align*}
It follows that
\begin{align*}
&\int_{0}^{\pi}\frac{h(\cos 2\theta; 1|q)}{h(\cos \theta; a, b, c, d, r|q)}
{_4\phi_3}\left({{ae^{i\theta}, ae^{-i\theta},\beta,\delta}\atop{s, t, h}}; q, bcdrz\right)d\theta\\
&=\frac{2\pi(abcd, abcr; q)_\infty}
{(q, ab, ac, ad, ar, bc, bd,br, cd, cr; q)_\infty}\\
&\quad \times
\sum_{n=0}^\infty \frac{(ab, ac, ad, ar, \beta, \delta; q)_n (bcdrz)^n}{(q, abcd, abcr, s, t, h; q)_n}{_3\phi_2}\left({{abq^n, acq^n, bc}\atop{abcdq^n, abcrq^n}}; q, dr\right).
\end{align*}
Equating this with the right-hand side of the equation in Theorem~\ref{Liu12parametersint}, we complete the proof of Theorem~\ref{Liu12parametersdoub}.
\end{proof}
\section{some special cases of Theorem~\ref{Liu12parametersint}}
\begin{thm}\label{SalamIsmail} For $\max\{|a|,|b|,|c|, |d|, |r|\}<1,$ we have
\begin{align*}
&\int_{0}^{\pi}\frac{h(\cos 2\theta; 1|q)h(\cos \theta; abcdrz|q)}{h(\cos \theta; a, b, c, d, r|q)}d\theta\\
&=\frac{2\pi (abcd, abcr, abdr, acdr, a^2bcdrz, bcdrz; q)_\infty}{(q, ab, ac, ad, ar, bc, bd, br, cd, cr, dr, a^2bcdr; q)_\infty}\\
&\qquad \times {_8W_7}\left(a^2bcdr/q; ab, ac, ad, ar, 1/z; bcdrz\right).
\end{align*}
\end{thm}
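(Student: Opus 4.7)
The plan is to derive Theorem~\ref{SalamIsmail} as the specialisation of Theorem~\ref{Liu12parametersint} obtained by setting $\beta=s$, $\delta=t$, and $h=a^2bcdrz=\alpha q z$ (recall $\alpha=a^2bcdr/q$). Under this choice the two pairs of coinciding upper/lower parameters cancel in each of the two ${}_4\phi_3$ series, so both sides of Theorem~\ref{Liu12parametersint} collapse to expressions involving only ${}_2\phi_1$ series, each of which is summable in closed form.

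On the left-hand integrand the inner series becomes
\[
{}_2\phi_1\!\left({{ae^{i\theta},\,ae^{-i\theta}}\atop{a^2bcdrz}};\,q,\,bcdrz\right),
\]
and since $bcdrz=C/(AB)$ for $A=ae^{i\theta}$, $B=ae^{-i\theta}$, $C=a^2bcdrz$, the $q$-Gauss summation evaluates it to
\[
\frac{(abcdrze^{i\theta},\,abcdrze^{-i\theta};q)_\infty}{(a^2bcdrz,\,bcdrz;q)_\infty}=\frac{h(\cos\theta;abcdrz\,|\,q)}{(a^2bcdrz,\,bcdrz;q)_\infty}.
\]
Thus, after this specialisation, the left-hand side of Theorem~\ref{Liu12parametersint} is exactly $(a^2bcdrz,bcdrz;q)_\infty^{-1}$ times the integral appearing on the left of Theorem~\ref{SalamIsmail}.

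The same specialisation reduces the inner series on the right to ${}_2\phi_1(q^{-n},\alpha q^n;\,\alpha qz;\,q,qz)$, whose argument $qz=(\alpha qz)q^n/(\alpha q^n)$ makes it immediately summable by the $q$-Chu--Vandermonde formula as $(zq^{1-n};q)_n/(\alpha qz;q)_n$. Applying the reversal identity $(zq^{1-n};q)_n=(1/z;q)_n(-z)^n q^{-\binom{n}{2}}$, the $q^{\binom{n}{2}}$ appearing in the $n$-th summand of Theorem~\ref{Liu12parametersint} cancels, the signs combine as $(-bcdr)^n(-z)^n=(bcdrz)^n$, and the summand acquires the new factor $(1/z;q)_n/(a^2bcdrz;q)_n$. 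Since $\alpha=a^2bcdr/q$ gives $\alpha q/(ab)=acdr$, $\alpha q/(ac)=abdr$, $\alpha q/(ad)=abcr$, and $\alpha q/(ar)=abcd$, the right-hand sum is then recognised as precisely
\[
{}_8W_7\!\left(a^2bcdr/q;\;ab,\,ac,\,ad,\,ar,\,1/z;\;q,\;bcdrz\right).
\]
Clearing the factor $(a^2bcdrz,bcdrz;q)_\infty$ across the identity, and using $q\alpha=a^2bcdr$ in the infinite-product prefactor, produces the statement of Theorem~\ref{SalamIsmail}.

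The only real obstacle is the bookkeeping: tracking the sign and $q^{\binom{n}{2}}$ cancellations in the reversal of $(zq^{1-n};q)_n$, and verifying that the four numerator parameters $ab,ac,ad,ar$ of the emerging ${}_8W_7$ are exactly complementary to the four denominator parameters $acdr,abdr,abcr,abcd$ under the base $\alpha=a^2bcdr/q$. The analytical content is nothing more than the $q$-Gauss and $q$-Chu--Vandermonde summations applied to the fully-parameterised identity in Theorem~\ref{Liu12parametersint}.
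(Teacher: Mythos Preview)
Your proposal is correct and follows essentially the same route as the paper's own proof: specialise Theorem~\ref{Liu12parametersint} so that two upper/lower parameter pairs cancel, then evaluate the resulting ${}_2\phi_1$ on the left by $q$-Gauss and the one on the right by $q$-Chu--Vandermonde. The only cosmetic difference is that the paper cancels by taking $s=a^2bcdrz$ and $(\beta,\delta)=(t,h)$, whereas you cancel by taking $\beta=s$, $\delta=t$, $h=a^2bcdrz$; the two choices yield the identical ${}_2\phi_1$ series and the rest of the argument coincides.
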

 This integral is slightly different in form from the Nassrallah--Rahman $q$-beta integral in Theorem~\ref{NassRahInt}.
Putting $z=1$, noting that $(1; q)_k = \delta_{0, k}$, the above equation
immediately reduces to the Nassrallah--Rahman $q$-beta integral in Theorem~\ref{RahInt}. 

Now 
we prove Theorem~\ref{SalamIsmail} by using Theorem~\ref{Liu12parametersint}, the $q$-Gauss summation formula and $q$-Chu--Vandermonde summation formula.
\begin{proof}
When $s=a^2bcdrz$ and $(\beta,\delta)=(t, h)$,
the $_4\phi_3$ series under the integral sign in Theorem~\ref{Liu12parametersint} reduces to 
\[
{_2\phi_1}\left({{ae^{i\theta}, ae^{-i\theta}}\atop{a^2bcdrz}}; q, bcdrz \right)
=\frac{(abcdrze^{i\theta}, abcdrz e^{-i\theta}; q)_\infty}{(a^2bcdrz, bcdrz; q)_\infty}
\]
by the $q$-Gauss summation \cite[Eq.(1.5.1)]{GasperRahman2004},  and the $_4\phi_3$ series on the right-hand side of the equation in Theorem~\ref{Liu12parametersint} becomes 
\[
{_2\phi_1}\left({{q^{-n}, a^2bcdrq^{n-1}}\atop{a^2bcdrz}}; q, qz \right)
=\frac{(1/z; q)_n (-z)^n}{(a^2bcdrz; q)_n} 
q^{-n(n-1)/2}
\]
by the $q$-Chu--Vandermonde summation \cite[Eq.(1.5.2)]{GasperRahman2004}. This completes the proof of Theorem~\ref{SalamIsmail}.
\end{proof}
The corresponding series form of Theorem~\ref{SalamIsmail} is stated in the following theorem,
which is equivalent to the formula due to Al--Salam and Ismail \cite[Eq.(4.4)]{Al--SalamIsmail1988}.
\begin{thm}\label{SalamIsmailnew} 
 For $\max\{|a|,|b|,|c|, |d|, |r|\}<1,$ we have
 \begin{align*}
 &\sum_{n=0}^\infty \frac{(ab, ac, ad, ar; q)_n (bcdrz)^n}{(q, a^2bcdrz, abcd, abcr; q)_n}
 {_3\phi_2}\left({{abq^n, acq^n, bc}\atop{abcdq^n, abcrq^n}}; q, dr\right)\\
 &=\frac{(abdr, acdr; q)_\infty}
 {(dr, a^2bcdr; q)_\infty}
 {_8W_7}\left(a^2bcdr/q; ab, ac, ad, ar, 1/z; bcdrz\right).
 \end{align*}
\end{thm}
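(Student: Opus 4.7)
My plan is to obtain Theorem~\ref{SalamIsmailnew} as the series companion of Theorem~\ref{SalamIsmail} in exactly the same way that Theorem~\ref{Liu12parametersdoub} was derived from Theorem~\ref{Liu12parametersint} in Section~5. The cleanest route is to apply to Theorem~\ref{Liu12parametersdoub} the same specialization that was just used in the proof of Theorem~\ref{SalamIsmail}, namely
\[
s = a^2bcdrz, \qquad (\beta,\delta) = (t,h).
\]
With these choices the factors $(\beta,\delta;q)_n$ on the left-hand side of Theorem~\ref{Liu12parametersdoub} reduce to $(t,h;q)_n$ and cancel the $(t,h;q)_n$ in the denominator, leaving precisely
\[
\sum_{n=0}^\infty \frac{(ab,ac,ad,ar;q)_n (bcdrz)^n}{(q, a^2bcdrz, abcd, abcr;q)_n}\, {}_3\phi_2\!\left({{abq^{n}, acq^{n}, bc}\atop{abcdq^n, abcrq^n}};q, dr\right),
\]
which is the left-hand side of Theorem~\ref{SalamIsmailnew}.

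For the right-hand side, the specialization collapses the inner $_4\phi_3$ appearing in Theorem~\ref{Liu12parametersdoub} to a terminating $_2\phi_1$ that is summable by the $q$-Chu--Vandermonde formula, just as was done for Theorem~\ref{SalamIsmail}: with $\alpha = a^2bcdr/q$,
\[
{_2\phi_1}\!\left({{q^{-n}, a^2bcdr q^{n-1}}\atop{a^2bcdrz}};q, qz\right) = \frac{(1/z;q)_n (-z)^n}{(a^2bcdrz;q)_n}\, q^{-\binom{n}{2}}.
\]
Substituting this evaluation into the outer sum on the right-hand side of Theorem~\ref{Liu12parametersdoub} makes the power $q^{\binom{n}{2}}$ cancel, and combines with $(-bcdr)^n$ to produce $(bcdrz)^n$. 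What remains is
\[
\frac{(abdr, acdr;q)_\infty}{(dr, a^2bcdr;q)_\infty} \sum_{n=0}^\infty \frac{(1-\alpha q^{2n})(\alpha, ab, ac, ad, ar, 1/z;q)_n}{(1-\alpha)(q, acdr, abdr, abcr, abcd, a^2bcdrz;q)_n}(bcdrz)^n,
\]
after using $q\alpha = a^2bcdr$.

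The final step is to recognize this sum as a very-well-poised ${}_8W_7$. A direct check with $\alpha = a^2bcdr/q$ gives $q\alpha/(ab) = acdr$, $q\alpha/(ac) = abdr$, $q\alpha/(ad) = abcr$, $q\alpha/(ar) = abcd$, and $q\alpha/(1/z) = a^2bcdrz$, which match precisely the five Pochhammer symbols in the denominator. Thus the sum equals
\[
{_8W_7}\!\left(a^2bcdr/q;\, ab, ac, ad, ar, 1/z;\, q, bcdrz\right),
\]
completing the proof. I expect no real obstacle here; the work is essentially bookkeeping to verify that the five ratios $q\alpha/a_j$ line up correctly with the five denominator parameters, which amounts to a symmetry check between the integral formulation and the ${}_8W_7$ series.
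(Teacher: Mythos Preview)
Your proof is correct and is essentially the approach implicit in the paper. The paper does not spell out a separate proof of Theorem~\ref{SalamIsmailnew}; it simply presents it as ``the corresponding series form'' of Theorem~\ref{SalamIsmail}, meaning it follows from Theorem~\ref{SalamIsmail} via the integral-to-series passage of Section~5. You instead specialize Theorem~\ref{Liu12parametersdoub} directly with $s=a^2bcdrz$, $(\beta,\delta)=(t,h)$ and apply the same $q$-Chu--Vandermonde evaluation used for Theorem~\ref{SalamIsmail}; since the specialization and the Section~5 passage commute, the two routes coincide, and your bookkeeping on the ${}_8W_7$ parameters is accurate.
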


\begin{thm}\label{SLiu12parametersinta}
For $\alpha=a^2bcdr/q$ and 	$\max\{|a|,|b|,|c|, |d|, |r|\}<1,$  we have 
\begin{align*}
&\int_{0}^{\pi} \frac{h(\cos 2\theta; 1|q)}
{h(\cos \theta; a, b, c, d, r|q)}
{_3\phi_2}\left({{ae^{i\theta}, ae^{-i\theta}, \alpha uv/q}\atop{\alpha u, \alpha v}}; q, bcdr\right) d\theta\\
&=\frac{2\pi (abcd, abcr, abdr, acdr; q)_\infty}{(q, ab, ac, ad, ar, bc, bd, br, cd, cr, dr, a^2bcdr; q)_\infty}\\
&\quad \times 
\sum_{n=0}^\infty \frac{(1-\alpha q^{2n})(\alpha, ab, ac, ad, ar, q/u, q/v; q)_n}{(1-\alpha)(q, abcd, abcr, abdr, acdr, \alpha u, \alpha v; q)_n}
\left(-\frac{\alpha bcdruv}{q}\right)^n q^{n(n-1)/2}.
\end{align*}
\end{thm}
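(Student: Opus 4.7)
The plan is to deduce Theorem~\ref{SLiu12parametersinta} as a direct specialization of Theorem~\ref{Liu12parametersint}. Specifically, I would set
\[
z=1,\qquad \beta=\frac{\alpha uv}{q},\qquad \delta=t,\qquad s=\alpha u,\qquad h=\alpha v
\]
in Theorem~\ref{Liu12parametersint}. Because $\delta$ now coincides with the lower parameter $t$, the factor $(t;q)_n$ cancels between numerator and denominator in both $_4\phi_3$ series. The $_4\phi_3$ under the integral collapses to precisely the $_3\phi_2$ appearing in the integrand of Theorem~\ref{SLiu12parametersinta}, while the terminating $_4\phi_3$ on the right-hand side reduces to
\[
{_3\phi_2}\!\left({{q^{-n},\ \alpha q^{n},\ \alpha uv/q}\atop{\alpha u,\ \alpha v}};\,q,\,q\right).
\]

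Next I would verify that this terminating $_3\phi_2$ is $q$-Pfaff--Saalschütz-balanced: the balance condition $(\alpha u)(\alpha v)=(\alpha q^{n})(\alpha uv/q)\,q^{1-n}$ holds identically. Applying the $q$-Pfaff--Saalschütz summation \cite[Eq.~(1.7.2)]{GasperRahman2004} then yields
\[
{_3\phi_2}\!\left({{q^{-n},\ \alpha q^{n},\ \alpha uv/q}\atop{\alpha u,\ \alpha v}};\,q,\,q\right)
=\frac{(uq^{-n},\,q/v;q)_n}{(\alpha u,\,q^{1-n}/(\alpha v);q)_n}.
\]
A short computation using the standard identity $(aq^{-n};q)_n=(-a)^{n}q^{-n(n+1)/2}(q/a;q)_n$ converts this ratio into
\[
\frac{(q/u,q/v;q)_n}{(\alpha u,\alpha v;q)_n}\left(\frac{\alpha uv}{q}\right)^{n}.
\]

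Substituting this evaluation back into the right-hand side of Theorem~\ref{Liu12parametersint} and combining $(-bcdr)^{n}\,(\alpha uv/q)^{n}=\bigl(-\alpha bcdruv/q\bigr)^{n}$ produces exactly the series stated in Theorem~\ref{SLiu12parametersinta}. The outer summand carries the Gaussian factor $q^{n(n-1)/2}$, so the resulting series converges absolutely for arbitrary $u$ and $v$, and hence the boundary choice $z=1$ is admissible under the final assertion of Theorem~\ref{Liu12parametersint}. The only mildly computational step is the bookkeeping that extracts the factor $(\alpha uv/q)^{n}$ out of $(uq^{-n};q)_n$ and $(q^{1-n}/(\alpha v);q)_n$; everything else is direct substitution into the master formula.
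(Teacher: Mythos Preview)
Your proposal is correct and follows essentially the same route as the paper: specialize Theorem~\ref{Liu12parametersint} so that one upper and one lower parameter of the $_4\phi_3$ cancel (the paper takes $\delta=h$, $t=\alpha v$ instead of your $\delta=t$, $h=\alpha v$, a harmless relabeling), then evaluate the resulting terminating balanced $_3\phi_2$ by the $q$-Pfaff--Saalsch\"utz formula to obtain $\frac{(q/u,q/v;q)_n}{(\alpha u,\alpha v;q)_n}(\alpha uv/q)^n$. Your remark on why $z=1$ is admissible is a welcome addition that the paper leaves implicit.
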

\begin{proof}
Noting that $\alpha=a^2bcdr/q$, choosing $s=\alpha u, t=\alpha v, \beta=\alpha uv/q$ 
and $\delta=h, z=1$ in Theorem~\ref{Liu12parametersint}, we deduce that
\begin{align*}
&\int_{0}^{\pi} \frac{h(\cos 2\theta; 1|q)}
{h(\cos \theta; a, b, c, d, r|q)}
{_3\phi_2}\left({{ae^{i\theta}, ae^{-i\theta}, \alpha uv/q}\atop{\alpha u, \alpha v}}; q, bcdr\right) d\theta\\
&=\frac{2\pi (abcd, abcr, abdr, acdr; q)_\infty}{(q, ab, ac, ad, ar, bc, bd, br, cd, cr, dr, a^2bcdr; q)_\infty}\\
&\quad \times 
\sum_{n=0}^\infty \frac{(1-\alpha q^{2n})(\alpha, ab, ac, ad, ar; q)_n}{(1-\alpha)(q, abcd, abcr, abdr, acdr; q)_n}(-bcdr)^n q^{n(n-1)/2}\\
&\qquad \qquad \qquad \qquad \qquad \qquad \qquad \times 
{_3\phi_2} \left({{q^{-n}, \alpha q^n, \alpha uv/q}\atop{\alpha u, \alpha v}}; q, q\right).
\end{align*}
By the well-known $q$-Pfaff--Saalsch\"{u}tz formula \cite[Eq.(1.7.2)]{GasperRahman2004}, we find that
\[
{_3\phi_2} \left({{q^{-n}, \alpha q^n, \alpha uv/q}\atop{\alpha u, \alpha v}}; q, q\right)
=\frac{(q/u, q/v; q)_n}{(\alpha u, \alpha v; q)_n}\left(\frac{\alpha uv}{q}\right)^n.
\]
Upon combining the two equations above we complete the proof of Theorem~\ref{SLiu12parametersinta}.
\end{proof}
The corresponding series form of Theorem~\ref{SLiu12parametersinta} is stated in the following theorem.
\begin{thm}\label{SLiu12parametersintb}
	For $\alpha=a^2bcdr/q$ and 	$\max\{|a|,|b|,|c|, |d|, |r|\}<1,$  we have 
	\begin{align*}
	&\sum_{n=0}^\infty \frac{(\alpha uv/q, ab, ac, ad, ar; q)_n (bcdr)^n}{(q, \alpha u, \alpha v, abcd, abcr; q)_n}
	{_3\phi_2}\left({{abq^n, acq^n, bc}\atop{abcdq^n, abcrq^n}}; q, dr\right)\\
	&=\frac{(abdr, acdr; q)_\infty}{( dr, q\alpha; q)_\infty}\\
	&\quad \times 
	\sum_{n=0}^\infty \frac{(1-\alpha q^{2n})(\alpha, ab, ac, ad, ar, q/u, q/v; q)_n}{(1-\alpha)(q, abcd, abcr, abdr, acdr, \alpha u, \alpha v; q)_n}
	\left(-\frac{\alpha bcdruv}{q}\right)^n q^{n(n-1)/2}.
	\end{align*}
\end{thm}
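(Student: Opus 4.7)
The plan is to obtain Theorem~\ref{SLiu12parametersintb} as a specialization of the double-sum identity in Theorem~\ref{Liu12parametersdoub}, using the very same parameter choices that converted Theorem~\ref{Liu12parametersint} into Theorem~\ref{SLiu12parametersinta} in the preceding proof. Concretely, I would set $\beta = \alpha uv/q$, $\delta = h$, $s = \alpha u$, $t = \alpha v$, and $z = 1$ in Theorem~\ref{Liu12parametersdoub}, where $\alpha = a^2bcdr/q$ as in the statement.

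Under these substitutions $\delta$ and $h$ coincide, so on the left-hand side of Theorem~\ref{Liu12parametersdoub} the factors $(\delta; q)_n = (h; q)_n$ cancel against $(h; q)_n$ in the denominator, leaving precisely the sum on the left-hand side of Theorem~\ref{SLiu12parametersintb}. The same cancellation on the right-hand side collapses the inner $_4\phi_3$ to a terminating Saalsch\"utzian $_3\phi_2$:
\[
{_4\phi_3}\left({{q^{-n}, \alpha q^n, \alpha uv/q, h}\atop{\alpha u, \alpha v, h}}; q, q\right) = {_3\phi_2}\left({{q^{-n}, \alpha q^n, \alpha uv/q}\atop{\alpha u, \alpha v}}; q, q\right).
\]
The relation $q\cdot q^{-n}\cdot \alpha q^n\cdot (\alpha uv/q) = (\alpha u)(\alpha v)$ shows this $_3\phi_2$ is 1-balanced, so the $q$-Pfaff--Saalsch\"utz formula \cite[Eq.(1.7.2)]{GasperRahman2004} evaluates it in closed form and, after a short inversion calculation on the resulting $q$-Pochhammer symbols, yields $(q/u, q/v; q)_n (\alpha uv/q)^n / (\alpha u, \alpha v; q)_n$. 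Substituting this back into the specialized right-hand side of Theorem~\ref{Liu12parametersdoub} produces exactly the well-poised single sum on the right-hand side of Theorem~\ref{SLiu12parametersintb}.

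The only genuinely delicate point is that Theorem~\ref{Liu12parametersdoub} is being applied at $|z| = 1$, where that theorem explicitly requires absolute convergence of the single sum on the right. The resulting series carries the Gaussian factor $q^{n(n-1)/2}$ and is straightforwardly controlled by the elementary bounds on $q$-shifted factorials established in Propositions~\ref{liuequality} and \ref{convergenceseries}, so the convergence hypothesis is automatic under $\max\{|a|,|b|,|c|,|d|,|r|\}<1$. I expect the main obstacle to be not conceptual but bookkeeping: verifying that the numerator factor $(abdr, acdr; q)_\infty$ in the prefactor of Theorem~\ref{SLiu12parametersintb} emerges correctly after the cancellations against $\alpha = a^2bcdr/q$ and the factors $(abcd, abcr, abdr, acdr; q)_n$ in the denominator of the right-hand sum of Theorem~\ref{Liu12parametersdoub}.
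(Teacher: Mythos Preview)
Your proposal is correct and mirrors exactly what the paper does implicitly: the paper presents Theorem~\ref{SLiu12parametersintb} simply as ``the corresponding series form of Theorem~\ref{SLiu12parametersinta},'' relying on the equivalence between Theorems~\ref{Liu12parametersint} and~\ref{Liu12parametersdoub} established in Section~5 together with the same specialization $(\beta,\delta,s,t,z)=(\alpha uv/q,h,\alpha u,\alpha v,1)$ and the $q$-Pfaff--Saalsch\"utz evaluation already used in the proof of Theorem~\ref{SLiu12parametersinta}. Your attention to the $|z|=1$ convergence hypothesis is a detail the paper does not make explicit, but your justification via the Gaussian factor $q^{n(n-1)/2}$ is correct.
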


\begin{thm}\label{SLiu12parametersintc}
	For $\alpha=a^2bcdr/q$ and 	$\max\{|a|,|b|,|c|, |d|, |r|\}<1,$  we have 
	\begin{align*}
	&\int_{0}^{\pi} \frac{h(\cos 2\theta; 1|q)}
	{h(\cos \theta; a, b, c, d, r|q)}
	{_3\phi_2}\left({{ae^{i\theta}, ae^{-i\theta}, 0}\atop{\sqrt{q\alpha}, -\sqrt{q\alpha}}}; q, bcdr\right) d\theta\\
	&=\frac{2\pi (abcd, abcr, abdr, acdr; q)_\infty}{(q, ab, ac, ad, ar, bc, bd, br, cd, cr, dr, a^2bcdr; q)_\infty}\\
	&\quad \times 
	\sum_{n=0}^\infty \frac{(1-\alpha q^{4n})(\alpha, ab, ac, ad, ar; q)_{2n} (q; q^2)_n}{(1-\alpha)(q, abcd, abcr, abdr, acdr; q)_{2n} (q\alpha; q^2)_n}
	(-\alpha)^n (bcdr)^{2n} q^{3n^2-n}.
	\end{align*}
\end{thm}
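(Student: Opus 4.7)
The plan is to derive Theorem~\ref{SLiu12parametersintc} by specializing Theorem~\ref{Liu12parametersint}, paralleling the derivation of Theorem~\ref{SLiu12parametersinta} but replacing the $q$-Pfaff--Saalsch\"utz summation with a quadratic one. Specifically, I would set $z=1$, $\beta=0$, $\delta=s$, $t=\sqrt{q\alpha}$ and $h=-\sqrt{q\alpha}$. Since $(0;q)_k=1$ and the choice $\delta=s$ makes $(\delta;q)_k/(s;q)_k=1$, the $_4\phi_3$ under the integral sign of Theorem~\ref{Liu12parametersint} collapses; using the base-change identity $(\sqrt{q\alpha},-\sqrt{q\alpha};q)_k=(q\alpha;q^2)_k$, it becomes exactly the $_3\phi_2$ on the left-hand side of Theorem~\ref{SLiu12parametersintc}.

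Under the same specialization the $_4\phi_3$ on the right-hand side of Theorem~\ref{Liu12parametersint} reduces to the terminating series
\[
S_n \;:=\; \sum_{k=0}^{n}\frac{(q^{-n},\alpha q^n;q)_k\,q^k}{(q;q)_k\,(q\alpha;q^2)_k}.
\]
The crux of the argument is then the quadratic summation
\[
S_n \;=\;
\begin{cases}
0, & n \text{ odd},\\[4pt]
(-\alpha)^m\,q^{m^2}\dfrac{(q;q^2)_m}{(q\alpha;q^2)_m}, & n=2m.
\end{cases}
\]
Once this is in hand, only the even-indexed terms survive in the outer sum of Theorem~\ref{Liu12parametersint}. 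Writing $n=2m$, the weight $(-bcdr)^n q^{n(n-1)/2}$ becomes $(bcdr)^{2m}q^{2m^2-m}$, which combines with $(-\alpha)^m q^{m^2}$ from $S_{2m}$ into the advertised $(-\alpha)^m(bcdr)^{2m}q^{3m^2-m}$; likewise $(1-\alpha q^{2n})$ becomes $(1-\alpha q^{4m})$ and each Pochhammer $(\cdot;q)_n$ becomes $(\cdot;q)_{2m}$, so the right-hand side of Theorem~\ref{SLiu12parametersintc} emerges exactly.

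The main obstacle is the quadratic summation for $S_n$. My preferred approach is induction on $n$ via the two-step contiguous relation
\[
S_n(\alpha) \;=\; -\,\frac{\alpha q\,(1-q^{n-1})}{1-q\alpha}\,S_{n-2}(\alpha q^2) \qquad (n\ge 2),
\]
together with the base cases $S_0(\alpha)=1$ and $S_1(\alpha)=0$, both immediate from the definition; iteration then produces the closed form for even $n$ and the vanishing for odd $n$. The contiguous relation itself can be established by combining the two sums $S_n(\alpha)$ and $\tfrac{\alpha q(1-q^{n-1})}{1-q\alpha}\,S_{n-2}(\alpha q^2)$ over the common denominator $(q;q)_k(q^3\alpha;q^2)_k$ via $(q\alpha;q^2)_k=(1-q\alpha)(q^3\alpha;q^2)_{k-1}$, and then checking that the resulting $k$-series telescopes to zero. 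As a conceptual cross-check, the same evaluation arises by specializing Theorem~\ref{rogersliuthm} with $\beta=\gamma=0$, $a=\sqrt{q/\alpha}$, $b=-\sqrt{q/\alpha}$, in which case the $_4\phi_3$ inside that theorem reduces to $S_n$ and the product on its right-hand side becomes a mixed-base generating function in $c$ whose coefficients match the claimed values of $S_n$.
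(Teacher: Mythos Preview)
Your proposal is correct and follows essentially the same route as the paper: specialize Theorem~\ref{Liu12parametersint} with $z=1$, $\beta=0$, $\delta=s$, $t=\sqrt{q\alpha}$, $h=-\sqrt{q\alpha}$, and then evaluate the resulting terminating ${}_3\phi_2$ in closed form so that only even-indexed terms survive. The only difference is that the paper simply cites this quadratic summation as the Verma--Jain identity \cite[Eq.~(2.28)]{VermaJain1980}, whereas you supply a self-contained derivation via the two-step contiguous relation; both lead to the same final computation.
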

\begin{proof}
It is easily seen that the identity of Verma and Jain \cite[Eq.(2.28)]{VermaJain1980} is equivalent to the summation 
\begin{equation*}
{_3\phi_2} \left({{q^{-n}, \alpha q^n, 0} \atop {\sqrt{q\alpha}, -\sqrt{q\alpha}}} ;  q, q \right)
=\begin{cases} 0 &\text {if $n$ is odd}\\
(-1)^l q^{l^2} \frac{(q; q^2)_l \alpha^l }{(q\alpha; q^2)_l}, & \text{if $n=2l$}
\end{cases}
\end{equation*}
Using this summation to Theorem~\ref{Liu12parametersint}, we complete the proof of Theorem~\ref{SLiu12parametersintc}.
\end{proof}
The corresponding series form of Theorem~\ref{SLiu12parametersintc} is stated in the following theorem.

\begin{thm}\label{SLiu12parametersintd}
	For $\alpha=a^2bcdr/q$ and 	$\max\{|a|,|b|,|c|, |d|, |r|\}<1,$  we have 
	\begin{align*}
	&\sum_{n=0}^\infty \frac{( ab, ac, ad, ar; q)_n (bcdr)^n}{(q, \sqrt{q\alpha}, -\sqrt{q\alpha}, abcd, abcr; q)_n}
	{_3\phi_2}\left({{abq^n, acq^n, bc}\atop{abcdq^n, abcrq^n}}; q, dr\right)\\
	&=\frac{(abdr, acdr; q)_\infty}{( dr, q\alpha; q)_\infty}\\
	&\quad \times 
	\sum_{n=0}^\infty \frac{(-\alpha)^n(1-\alpha q^{4n})(\alpha, ab, ac, ad, ar; q)_{2n} (q; q^2)_n}{(1-\alpha)(q, abcd, abcr, abdr, acdr; q)_{2n} (q\alpha; q^2)_n}
	(bcdr)^{2n} q^{3n^2-n}.
	\end{align*}
	\end{thm}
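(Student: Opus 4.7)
The plan is to obtain Theorem~\ref{SLiu12parametersintd} as the series counterpart of Theorem~\ref{SLiu12parametersintc}, in exact parallel with how Theorem~\ref{Liu12parametersdoub} arose from Theorem~\ref{Liu12parametersint}. The cleanest route is simply to specialize the already-established Theorem~\ref{Liu12parametersdoub}: take
\[
s=\sqrt{q\alpha},\quad t=-\sqrt{q\alpha},\quad \beta=0,\quad \delta=h,\quad z=1.
\]
With $\delta=h$ the factors $(\delta;q)_n/(h;q)_n$ cancel, and $(\beta;q)_n=(0;q)_n=1$, so the left-hand side of Theorem~\ref{Liu12parametersdoub} collapses precisely to the left-hand side of Theorem~\ref{SLiu12parametersintd}. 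Meanwhile, on the right-hand side, the inner series becomes
\[
{_4\phi_3}\!\left({{q^{-n},\,\alpha q^{n},\,0,\,h}\atop{\sqrt{q\alpha},\,-\sqrt{q\alpha},\,h}};q,q\right)
={_3\phi_2}\!\left({{q^{-n},\,\alpha q^{n},\,0}\atop{\sqrt{q\alpha},\,-\sqrt{q\alpha}}};q,q\right),
\]
and the Verma--Jain summation quoted in the proof of Theorem~\ref{SLiu12parametersintc} evaluates this in closed form: it vanishes for odd $n$, and for $n=2l$ it equals $(-1)^{l}q^{l^{2}}(q;q^{2})_{l}\alpha^{l}/(q\alpha;q^{2})_{l}$. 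Folding this even/odd reduction into the outer sum and relabeling $l\to n$ produces precisely the right-hand side of Theorem~\ref{SLiu12parametersintd}.

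As an alternative (and as an independent check) one can avoid Theorem~\ref{Liu12parametersdoub} and argue directly from Theorem~\ref{SLiu12parametersintc} itself, imitating the proof of Theorem~\ref{Liu12parametersdoub}. Expand
\[
{_3\phi_2}\!\left({{ae^{i\theta},\,ae^{-i\theta},\,0}\atop{\sqrt{q\alpha},\,-\sqrt{q\alpha}}};q,bcdr\right)
=\sum_{n=0}^{\infty}\frac{(ae^{i\theta},ae^{-i\theta};q)_{n}(bcdr)^{n}}{(q,\sqrt{q\alpha},-\sqrt{q\alpha};q)_{n}},
\]
interchange sum and integral (justified exactly as in the proof of Theorem~\ref{Liu12parametersdoub}, using the bound \reff{liu:eqn2} together with the boundedness of $h(\cos 2\theta;1|q)/h(\cos\theta;a,b,c,d,r|q)$ from Proposition~\ref{Liubounded}), and apply the identity $(x;q)_{n}(xq^{n};q)_{\infty}=(x;q)_{\infty}$ to fold $(ae^{i\theta},ae^{-i\theta};q)_{n}$ into the measure. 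Each resulting integral
\[
\int_{0}^{\pi}\frac{h(\cos 2\theta;1|q)}{h(\cos\theta;aq^{n},b,c,d,r|q)}\,d\theta
\]
is evaluated by the Ismail--Stanton--Viennot formula (Theorem~\ref{IsmailStanton}); the $q$-Pochhammer shifts $(abcdq^{n};q)_{\infty}=(abcd;q)_{\infty}/(abcd;q)_{n}$, etc., produce the factors $(ab,ac,ad,ar;q)_{n}/(abcd,abcr;q)_{n}$ and a clean overall prefactor $2\pi(abcd,abcr;q)_{\infty}/(q,ab,ac,ad,ar,bc,bd,br,cd,cr;q)_{\infty}$. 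Equating the result with the explicit right-hand side supplied by Theorem~\ref{SLiu12parametersintc} and dividing through by this common prefactor (noting $a^{2}bcdr=q\alpha$) yields the stated identity.

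The only nontrivial step in either route is justifying the termwise integration, and this has already been carried out in detail in the proof of Theorem~\ref{Liu12parametersdoub}; no new convergence issue arises since $|bcdr|<1$ under the stated hypotheses. Everything else is bookkeeping — applying Verma--Jain in the first route, or applying Ismail--Stanton--Viennot in the second — so I do not anticipate any genuine obstacle.
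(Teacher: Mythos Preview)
Your proposal is correct and matches the paper's approach: the paper does not write out a separate proof but simply declares this to be ``the corresponding series form'' of Theorem~\ref{SLiu12parametersintc}, i.e., it is obtained exactly as you describe---either by specializing Theorem~\ref{Liu12parametersdoub} with $\beta=0$, $\delta=h$, $s=\sqrt{q\alpha}$, $t=-\sqrt{q\alpha}$, $z=1$ and invoking the Verma--Jain evaluation, or equivalently by expanding the ${}_3\phi_2$ under the integral in Theorem~\ref{SLiu12parametersintc} and applying the Ismail--Stanton--Viennot integral termwise as in the proof of Theorem~\ref{Liu12parametersdoub}. Your verification that the $|z|=1$ proviso is met (the right-hand side, after Verma--Jain, carries the factor $q^{3n^2-n}$ and hence converges absolutely) is the one point one should mention, and you do.
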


\begin{thm}\label{SLiu12andrews}
	For $\alpha=a^2bcdr/q$ and 	$\max\{|a|,|b|,|c|, |d|, |r|\}<1,$  we have 
	\begin{align*}
	&\int_{0}^{\pi} \frac{h(\cos 2\theta; 1|q)}
	{h(\cos \theta; a, b, c, d, r|q)}
	{_4\phi_3}\left({{ae^{i\theta}, ae^{-i\theta}, \sqrt{\lambda},-\sqrt{\lambda}}\atop{\sqrt{q\alpha}, -\sqrt{q\alpha}, \lambda}}; q, bcdr\right) d\theta\\
	&=\frac{2\pi (abcd, abcr, abdr, acdr; q)_\infty}{(q, ab, ac, ad, ar, bc, bd, br, cd, cr, dr, a^2bcdr; q)_\infty}\\
	&\quad \times 
	\sum_{n=0}^\infty \frac{(1-\alpha q^{4n})(\alpha, ab, ac, ad, ar; q)_{2n} (q, q\alpha/\lambda; q^2)_n}{(1-\alpha)(q, abcd, abcr, abdr, acdr; q)_{2n} (q\alpha, q \lambda; q^2)_n}
	 (bcdr)^{2n} \lambda^n q^{2n^2-2n}.
	\end{align*}
\end{thm}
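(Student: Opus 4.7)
The plan is to apply Theorem~\ref{Liu12parametersint} with the specialization $s = \sqrt{q\alpha}$, $t = -\sqrt{q\alpha}$, $h = \lambda$, $\beta = \sqrt{\lambda}$, $\delta = -\sqrt{\lambda}$, and $z = 1$. Under this choice the ${_4\phi_3}$ in the integrand on the left is exactly the series appearing in Theorem~\ref{SLiu12andrews}, while the inner ${_4\phi_3}$ on the right-hand side of Theorem~\ref{Liu12parametersint} collapses to the terminating well-poised series
\[
{_4\phi_3}\left({{q^{-n}, \alpha q^n, \sqrt{\lambda}, -\sqrt{\lambda}}\atop{\sqrt{q\alpha}, -\sqrt{q\alpha}, \lambda}}; q, q\right).
\]

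Next, I would invoke Andrews' $q$-analogue of Watson's $_3F_2$ summation, which evaluates this series: it vanishes when $n$ is odd, and for $n = 2l$ it equals $\frac{(q, q\alpha/\lambda; q^2)_l}{(q\alpha, q\lambda; q^2)_l}(\lambda/q)^l$. Exactly as in the proof of Theorem~\ref{SLiu12parametersintc}, this annihilates the odd-indexed terms of the outer sum in Theorem~\ref{Liu12parametersint}, and only the indices $n = 2l$ survive; one then relabels $l$ back to $n$.

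The remaining task is straightforward bookkeeping. Combining $(-bcdr)^{2l} = (bcdr)^{2l}$ with $q^{n(n-1)/2}|_{n=2l} = q^{2l^2 - l}$ and the extra factor $q^{-l}\lambda^l$ furnished by Andrews' formula produces $(bcdr)^{2l}\lambda^l q^{2l^2 - 2l}$; the substitution $n \mapsto 2l$ turns $(\alpha, ab, ac, ad, ar; q)_n$ and $(q, abcd, abcr, abdr, acdr; q)_n$ into their $2l$-indexed counterparts and promotes $1 - \alpha q^{2n}$ to $1 - \alpha q^{4l}$; and the $(q^2; q^2)$-shifted factorials $(q, q\alpha/\lambda; q^2)_l$ and $(q\alpha, q\lambda; q^2)_l$ are inherited directly from Andrews' summation. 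Assembling these pieces yields the right-hand side of Theorem~\ref{SLiu12andrews} after identifying $l$ with $n$.

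No new convergence issue arises: since $z = 1$ the inner series is terminating, and the outer sum converges absolutely because $|bcdr| < 1$ under our hypotheses, so Theorem~\ref{Liu12parametersint} applies without complication. The only genuinely nonroutine ingredient is Andrews' summation formula itself, and verifying that it has the precise shape needed to produce $q^{2n^2 - 2n}\lambda^n$ is the single place where careful checking is required; beyond that, no new work on $q$-series transformations or integrals is needed.
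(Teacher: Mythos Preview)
Your proposal is correct and follows exactly the paper's approach: specialize Theorem~\ref{Liu12parametersint} with $(\beta,\delta,s,t,h,z)=(\sqrt{\lambda},-\sqrt{\lambda},\sqrt{q\alpha},-\sqrt{q\alpha},\lambda,1)$ and then evaluate the inner terminating ${_4\phi_3}$ via Andrews' $q$-Watson summation, which kills the odd-indexed terms and reduces the even ones. Your bookkeeping is in fact slightly more careful than the paper's, since your stated value $\frac{(q,\,q\alpha/\lambda;q^2)_l}{(q\alpha,\,q\lambda;q^2)_l}(\lambda/q)^l$ for $n=2l$ is what is actually needed to produce the factor $q^{2l^2-2l}\lambda^l$ in the final formula (the paper's displayed version of Andrews' formula omits the $q^{-l}$).
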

\begin{proof}Recall that the $q$-Watson formula due to Andrews \cite{Andrews1976} states that
\begin{equation*}
{_4\phi_3} \left({{q^{-n}, \alpha q^n, \sqrt{\lambda}, -\sqrt{\lambda}} \atop {\sqrt{q \alpha}, -\sqrt{q \alpha}, \lambda}} ;  q, q \right)
=\begin{cases} 0 &\text {if $n$ is odd}\\
\frac{(q, \alpha q/\lambda; q^2)_{n/2} \lambda^{n/2}}{(q\alpha, q\lambda; q^2)_{n/2}}, & \text{if $n$ is even}.
\end{cases}
\end{equation*}	
For an alternative proof of this identity, please see \cite[Proposition~8.3]{Liu2014JMAA}. Using this
equation to Theorem~\ref{Liu12parametersint}, we complete the proof of Theorem~\ref{SLiu12andrews}.	
\end{proof}	
The corresponding series form of Theorem~\ref{SLiu12andrews} is stated in the following theorem.
\begin{thm}\label{SLiu12andrewsa}
	For $\alpha=a^2bcdr/q$ and 	$\max\{|a|,|b|,|c|, |d|, |r|\}<1,$  we have 
	\begin{align*}
	&\sum_{n=0}^\infty \frac{( \sqrt{\lambda}, -\sqrt{\lambda}, ab, ac, ad, ar; q)_n (bcdr)^n}{(q, \sqrt{q\alpha}, -\sqrt{q\alpha}, \lambda, abcd, abcr; q)_n}
	{_3\phi_2}\left({{abq^n, acq^n, bc}\atop{abcdq^n, abcrq^n}}; q, dr\right)\\
	&=\frac{(abdr, acdr; q)_\infty}{( dr, q\alpha; q)_\infty}\\
	&\quad \times 
		\sum_{n=0}^\infty \frac{(1-\alpha q^{4n})(\alpha, ab, ac, ad, ar; q)_{2n} (q, q\alpha/\lambda; q^2)_n}{(1-\alpha)(q, abcd, abcr, abdr, acdr; q)_{2n} (q\alpha, q \lambda; q^2)_n}
	(bcdr)^{2n} \lambda^n q^{2n^2-2n}.
	\end{align*}
\end{thm}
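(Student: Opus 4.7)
The plan is to obtain Theorem~\ref{SLiu12andrewsa} as a direct specialization of the double $q$-series transformation formula of Theorem~\ref{Liu12parametersdoub}, using exactly the same parameter choices that were used to pass from Theorem~\ref{Liu12parametersint} to Theorem~\ref{SLiu12andrews}. Concretely, in Theorem~\ref{Liu12parametersdoub} I would set $\beta=\sqrt{\lambda}$, $\delta=-\sqrt{\lambda}$, $s=\sqrt{q\alpha}$, $t=-\sqrt{q\alpha}$, $h=\lambda$, and $z=1$. Since Theorem~\ref{Liu12parametersdoub} asserts that its left-hand double series equals the same single series that appears on the right-hand side of Theorem~\ref{Liu12parametersint}, specializing both sides simultaneously produces the identity to be proved.

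First I would check that the left-hand side transforms correctly: the prefactor
\[
\frac{(\beta,\delta,ab,ac,ad,ar;q)_n (bcdrz)^n}{(q,s,t,h,abcd,abcr;q)_n}
\]
in Theorem~\ref{Liu12parametersdoub} becomes
\[
\frac{(\sqrt{\lambda},-\sqrt{\lambda},ab,ac,ad,ar;q)_n (bcdr)^n}{(q,\sqrt{q\alpha},-\sqrt{q\alpha},\lambda,abcd,abcr;q)_n},
\]
which is exactly the prefactor appearing inside the outer sum on the LHS of Theorem~\ref{SLiu12andrewsa}; the inner $_3\phi_2$ is untouched by this substitution and matches directly.

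Next I would handle the right-hand side. The inner $_4\phi_3$ of Theorem~\ref{Liu12parametersdoub} specializes to
\[
{_4\phi_3}\left({{q^{-n},\alpha q^n,\sqrt{\lambda},-\sqrt{\lambda}}\atop{\sqrt{q\alpha},-\sqrt{q\alpha},\lambda}};q,q\right),
\]
which is precisely the sum evaluated by the $q$-analogue of Watson's formula due to Andrews that was used in the proof of Theorem~\ref{SLiu12andrews}: it vanishes for odd $n$ and equals $(q,\alpha q/\lambda;q^2)_{n/2}\lambda^{n/2}/(q\alpha,q\lambda;q^2)_{n/2}$ for even $n$. Re-indexing $n\mapsto 2n$ and combining the resulting $q$-powers with the very-well-poised Pochhammer block yields exactly the single sum on the RHS of Theorem~\ref{SLiu12andrewsa}, which is identical in form to the single sum on the RHS of Theorem~\ref{SLiu12andrews} multiplied by the prefactor $(abdr,acdr;q)_\infty/(dr,q\alpha;q)_\infty$ supplied by Theorem~\ref{Liu12parametersdoub}.

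The only step I expect to require care is the verification that Theorem~\ref{Liu12parametersdoub} is legitimately applicable at $z=1$ with this choice of parameters, since the theorem's hypothesis permits $|z|=1$ only when the series on its right-hand side converges absolutely. Here, however, the $q$-Watson evaluation reduces the inner $_4\phi_3$ to a single closed-form factor that grows only like a product of $q$-Pochhammer symbols, so under $\max\{|a|,|b|,|c|,|d|,|r|\}<1$ the outer sum converges absolutely by the ratio test in the same way as in the proof of Theorem~\ref{SLiu12andrews}. Once that convergence check is in place, the two specializations on the two sides of Theorem~\ref{Liu12parametersdoub} yield the claimed identity with no further computation required.
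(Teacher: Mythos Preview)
Your proposal is correct and is exactly the approach the paper intends: the paper gives no separate proof of Theorem~\ref{SLiu12andrewsa} but simply presents it as ``the corresponding series form of Theorem~\ref{SLiu12andrews},'' meaning it is obtained from Theorem~\ref{Liu12parametersdoub} by the same specialization $\beta=\sqrt{\lambda}$, $\delta=-\sqrt{\lambda}$, $s=\sqrt{q\alpha}$, $t=-\sqrt{q\alpha}$, $h=\lambda$, $z=1$ together with Andrews' $q$-Watson evaluation, just as Theorem~\ref{SLiu12andrews} was obtained from Theorem~\ref{Liu12parametersint}. Your added remark on absolute convergence at $z=1$ is a detail the paper omits entirely, and your justification via the quadratic $q$-power in the summand is sound.
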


\section{Acknowledgments}
I am grateful to the anonymous referee and the editor  for careful reading of the manuscript and many invaluable suggestions and comments.


\begin{thebibliography}{9}
\bibitem{Al--SalamIsmail1988}	
 W. A. Al-Salam, M. E. H. Ismail, $q$-beta integrals and the $q$-Hermite polynomials. Pacific J. Math. 135 (1988) 209--221.
 
 \bibitem{Andrews1976}
  G. E. Andrews, On $q$-analogues of the Watson and Whipple summations, SIAM J. Math. Anal.
7 (1976) 332--336.

\bibitem{Askey1983}
R. Askey, An elementary evaluation of a beta type integral, Indian J. Pure. Appl. Math.,
14 (1983) 892--895.

\bibitem{AskWil1985} R. Askey and J. Wilson,
Some basic hypergeometric orthogonal polynomials that generalize
Jacobi polynomials, Memoirs. Amer. Math. Soc. 54, No. 319 (1985).

\bibitem{GasperRahman2004}
G. Gasper and M. Rahman, Basic Hypergeometric series, 2nd ed., Cambridge Univ. Press,
Cambridge, 2004.

\bibitem{IsmailWilson1982}
M. E. H. Ismail, J. A. Wilson, Asymptotic and generating relations for the $q$-Jacobi
and $_4\phi_3$ polynomials, J. Approx. Theory 36 (1) (1982) 43--54.

\bibitem{IsmailStanton1987}
M. E. H. Ismail, D. Stanton and G. Viennot, The combinatorics of $q$-Hermite polynomials and
the Askey--Wilson integral, Europ. J. Combin. 8 (1987) 379--392.

\bibitem{IsmailRahmanSuslov1997}
M. E. H. Ismail, M. Rahman, and S. K. Suslov, Some summation theorems and transformations
for $q$-series, Can. J. Math. 49 (1997) 543--567.

\bibitem{IsmailZhang2005}
M.E.H. Ismail, R. Zhang, New proofs of some $q$-series results, in: Theory and Applications of
Special Functions, in: Dev. Math., vol. 13, Springer, New York, 2005, pp. 285--299.

\bibitem{Ismail2009}
 M.E.H. Ismail, Classical and Quantum Orthogonal Polynomials in One Variable, Encyclopedia
of Mathematics and its Applications, vol. 98, Cambridge University Press, Cambridge, 2009.

\bibitem{KalninsMiller1989}
E.G. Kalnins, W. Miller, Symmetry techniques for $q$-series: Askey--Wilson polynomials. Rocky
Mt. J. Math. 19 (1989) 233--240.

\bibitem{KoekoekLesky2010}
 R. Koekoek, P.A. Lesky, R.F. Swarttouw, Hypergeometric Orthogonal Polynomials and Their
$q$-Analogues, Springer-Verlag, Berlin, 2010.



\bibitem{Liu1997}
Z.-G. Liu, $q$-Hermite polynomials and a $q$-beta integral. Northeast. Math. J. 13 (1997) 361--366.

\bibitem{LiuRamJ2009}
Z.-G. Liu, An identity of Andrews and the Askey-Wilson integral. Ramanujan J. 19 (2009)
115--119.

\bibitem{Liu2011}
 Z.-G. Liu, An extension of the non-terminating $_6\phi_5$ summation and the Askey--Wilson polynomials,
J. Difference Equ. Appl. 17 (2011) 1401--1411.

\bibitem{Liu2013RamLegacy}
 Z.-G. Liu, On the $q$-partial differential equations and $q$-series, The Legacy of Srinivasa Ramanujan,
Ramanujan Math. Soc. Lect. Notes Ser., vol. 20, pp. 213--250. Ramanujan Math.
Soc., Mysore, 2013.

\bibitem{Liu2013RamJ}
 Z.-G, Liu, A $q$-series expansion formula and the Askey--Wilson polynomials, Ramanujan J. 30 (2013) 193--210.
 
 \bibitem{Liu2013IntJNT}
 Z.-G. Liu, On the $q$-derivative and q-series expansion, Int. J. Number Theory 9 (2013) 2069--2089.
 
 \bibitem{Liu2014JMAA}
 Z.-G. Liu, A $q$-summation formula, the continuous $q$-Hahn polynomials and the big $q$-Jacobi
polynomials. J. Math. Anal. Appl. 419 (2014) 1045--1064.

\bibitem{LiuZeng2015}Z.-G. Liu and J. Zeng,
Two expansion formulas involving the Rogers--Szeg\H{o} polynomials with applications, Int. J. Number Theory 11 (2015) 507--525.

\bibitem{Liu2016Symmetry}
 Z.-G, Liu, On a reduction formula for a kind of double $q$-integrals. Symmetry 8 (2016) Art. 44,
16 pp.

\bibitem{Liu2016JMAA}
Z.-G. Liu, Extensions of Ramanujan's reciprocity theorem and the Andrews--Askey integral, J.
Math. Anal. Appl. 443 (2016) 1110--1129.

\bibitem{Liu2016Symmetry}Z.-G. Liu,
On a reduction formula for a kind of double $q$-integrals. Symmetry 8 (2016), no. 6, Art. 44, 16 pp.

\bibitem{NassrallahRahman1985}
B. Nassrallah, M. Rahman, Projection Formulas, a Reproducing Kernel and a Generating
Function for $q$-Wilson Polynomials, SIAM J. Math. Anal., 16 (1985) 186--197.

\bibitem{Rahman1984}
 M. Rahman, A simple evaluation of Askey and Wilson's integral, Proc. Am. Math. Soc. 92
(1984) 413--417.

\bibitem{Rahman1986} M. Rahman, An integral representation of a $_{10}\phi_9$ and continuous biorthogonal $_{10}\phi_9$ rational
functions, Canad. J. Math. 38 (1986) 605--618.

\bibitem{Szablowski2011}
 P. J. Szablowski, On the structure and probabilistic interpretation of  Askey--Wilson densities
and polynomials with complex parameters, J. Funct. Anal. 261 (2011) 635--659.

\bibitem{VermaJain1980} A. Verma, V. K. Jain, Transformations between basic hypergeometric series on different bases
and identities of Rogers--Ramanujan type, J. Math Anal. Appl. 76 (1980) 230--269.





\end{thebibliography}
\end{document}